\title{The fundamental group of partial compactifications of the complement of a real line arrangement}
\author{Rodolfo Aguilar Aguilar\thanks{Partially supported by ERC ALKAGE and ANR project Hodgefun.}}
\date{}
\tikzset{% inspired by https://tex.stackexchange.com/a/316050/121799
    arc arrow/.style args={%
    to pos #1 with length #2}{
    decoration={
        markings,
         mark=at position 0 with {\pgfextra{%
         \pgfmathsetmacro{\tmpArrowTime}{#2/(\pgfdecoratedpathlength)}
         \xdef\tmpArrowTime{\tmpArrowTime}}},
        mark=at position {#1-\tmpArrowTime} with {\coordinate(@1);},
        mark=at position {#1-2*\tmpArrowTime/3} with {\coordinate(@2);},
        mark=at position {#1-\tmpArrowTime/3} with {\coordinate(@3);},
        mark=at position {#1} with {\coordinate(@4);
        \draw[-{Stealth[length=#2,bend]}]       
        (@1) .. controls (@2) and (@3) .. (@4);},
        },
     postaction=decorate,
     }
}
\theoremstyle{plain} 
\newtheorem{thm}{Theorem}[section]
\newtheorem{lem}[thm]{Lemma}
\newtheorem{prop}[thm]{Proposition}
\newtheorem{cor}{Corollary}
\newtheorem{defn}{Definition}[section]
\newtheorem{rem}{Remark}
\newtheorem{ques}{Question}
\DeclareMathOperator{\Bl}{Bl}
\DeclareMathOperator{\GL}{GL}
\DeclareMathOperator{\Sing}{Sing}
\newcommand{\abs}[1]{\left\vert#1\right\vert}
\newcommand{\A}{\mathscr{A}}
\begin{document}

\maketitle

\begin{abstract}
  Let $\A$ be a real projective line arrangement and $M(\A)$ its complement in $\mathbb{CP}^2$. We obtain an explicit expression in terms of Randell's generators of the meridians around the exceptional divisors in the blow-up $\bar{X}$ of $\mathbb{CP}^2$ in the singular points of $\A$. We use this to investigate the partial compactifications of $M(\A)$ contained in $\bar{X}$ and give a counterexample to a statement suggested by A. Dimca and P. Eyssidieux to the effect that the fundamental group of such an algebraic variety is finite whenever its abelianization is.
\end{abstract}

\section{Introduction}

The study of the fundamental group of smooth algebraic varieties is a classical problem in complex geometry. One of the most studied case is the complement of an arrangement of lines $\A$ in $\mathbb{P}^2$.  Several methods have been used for computing this group, for example: \cite{suciu}, \cite{salvetti}, \cite{Randell} . 

Consider a real arrangement of lines $\A=\{L_1,\ldots, L_n\}$ in $\mathbb{P}^2$ and denote by $\bar{X}$ the blow up at $\Sing \A=\{p_1,\ldots, p_s\}$, the set of multiple points of the arrangement . Denote by $D_i$ the strict transform of the lines in $\A$ and by $D_j$ the exceptional divisors. Let $D=\sum_{l=1}^{n+s}r_lD_l$ be a divisor in $\bar{X}$ with $r_l\in \mathbb{N}^*$ or equal to infinity. Denote by $r=(r_1,\ldots, r_s)$. To this datum we can associate the orbifold fundamental group $\pi_1(\mathcal{X}(X,D,r))$ (see e.g. \cite{Eyssidieux}).
\begin{thm}[Thm \ref{MainT}] \label{Thm1}
There is a presentation of $\pi_1(\mathcal{X}(\bar{X},D,r))$ obtained by adding to Randell's presentation relations that are powers of \underline{explicit} words in Randell's generators.
\end{thm}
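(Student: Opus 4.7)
The plan is to start from Randell's presentation of $\pi_1(M(\A))$, compute for each singular point $p_j\in\Sing\A$ an explicit word $w_j$ in Randell's generators representing a meridian around the exceptional divisor $E_j$ over $p_j$, and then appeal to the standard description of orbifold fundamental groups of smooth pairs: if $D=\sum_l r_l D_l$ is a simple normal crossing divisor on a smooth $X$, then $\pi_1(\mathcal{X}(X,D,r))$ is the quotient of $\pi_1(X\setminus|D|)$ by the normal subgroup generated by the $r_l$-th powers of meridians $\gamma_l$ around each $D_l$. Since $\bar X\setminus\bigcup_l D_l=M(\A)$, and each line component $D_i$ ($i\le n$) already has a Randell generator as its meridian, the only missing piece is the word $w_j$ for each exceptional divisor; adding the relations $x_i^{r_i}=1$ and $w_j^{r_j}=1$ to Randell's presentation then yields the statement.

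First I would recall Randell's construction. One fixes a generic line at infinity and a generic horizontal sweep line $\ell$, chooses a basepoint $b\in\ell$ above every singular point of $\A$, and defines meridian generators $x_1,\dots,x_{n-1}$ via braid paths descending from $b$ to small loops around each affine line; the defining relations are read off as the local monodromy at each singular point of $\A$ using the wiring diagram. Next, for a fixed singular point $p_j$ of multiplicity $k_j$, lying on the lines $L_{i_1},\dots,L_{i_{k_j}}$, the essential geometric input is local: in a small bidisc $U$ around $p_j$, the pair $(U,U\cap|\A|)$ is biholomorphic to the complement of $k_j$ concurrent lines in $\mathbb{C}^2$, whose fundamental group has a central element given by the product of the local meridians $\mu_{i_1}\cdots\mu_{i_{k_j}}$ in the cyclic order induced by the slopes at $p_j$; after blowing up, this central element is precisely a meridian of the exceptional divisor $E_j$. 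The explicit $w_j$ is then obtained by conjugating this local product by the word describing the transport, along Randell's paths, from the basepoint $b$ to the chosen local basepoint near $p_j$; this transport word is entirely readable from the wiring diagram as a product of the $x_m$ corresponding to the lines $L_m$ whose segments are crossed along the way.

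The main obstacle is the careful bookkeeping of these conjugating factors: one must reconcile the cyclic local order around $p_j$ with the linear order coming from Randell's sweep, and correctly handle the braiding induced by other singular points lying on the vertical path from $b$ down to $p_j$. I plan to address this by formulating a precise lifting rule along the wiring diagram, in which each crossing of a strand $L_m$ on the way down contributes a conjugation by $x_m$ (with a sign determined by the orientation of the crossing), and to verify this rule by matching its output against the known presentation of $\pi_1$ of the complement of $k_j$ concurrent lines, where centrality of the product pins down the meridian of $E_j$ up to unique conjugacy. Once the words $w_j$ are produced in this explicit form, combining them with Randell's presentation and imposing the relations $x_i^{r_i}=1$ and $w_j^{r_j}=1$ gives the stated presentation of $\pi_1(\mathcal{X}(\bar X,D,r))$.
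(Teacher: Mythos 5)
Your overall architecture matches the paper's: both reduce the theorem to (i) the standard fact that $\pi_1(\mathcal{X}(\bar X,D,r))$ is the quotient of $\pi_1(\bar X\setminus D)\cong\pi_1(M(\A))$ by the normal closure of the $r_l$-th powers of meridians of the components $D_l$, and (ii) the production of an explicit word in Randell's generators representing a meridian of each exceptional divisor. Where you genuinely diverge is in the mechanism for (ii). You work locally: in a bidisc around $p_j$ the central element of the local group of $k_j$ concurrent lines becomes a meridian of $E_j$ after blow-up, and you then conjugate by a transport word read off the wiring diagram. This is essentially the Fujita--Garber approach, which the paper explicitly cites and positions itself against. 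The paper instead replaces the translating sweep line by a pencil of lines rotating about the base point $q$ itself; the meridian of $E_j$ is then an \emph{elementary loop} contained in the pencil member $\Sigma_{t_j}$ through $q$ and $p_j$, and equals the product of \emph{consecutive} elements of the elementary geometric base $\Gamma^{(j-1)}$ (Lemma \ref{finite}), with a separate formula for points at infinity (Lemma \ref{meridian3}). The payoff of the paper's device is that no transport word ever has to be computed: the conjugations are already encoded in the recursive update rules for the geometric base (Proposition \ref{1eq} and the formulas of Section \ref{subsection2.2}). Your approach buys locality and generality (it would work verbatim for non-real arrangements given braid monodromy data), at the cost of making the global-to-local comparison the hard step.

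Two caveats. First, the transport bookkeeping you defer (``a precise lifting rule along the wiring diagram'') \emph{is} the actual content of the word ``explicit'' in the statement; a crossing of the path with a strand contributes conjugation by the meridian of that strand \emph{at that stage of the diagram}, which is itself an already-conjugated word, not by the bare generator $x_m$, and this recursion is exactly what the paper's Section \ref{3} carries out. A useful simplification you could invoke: since the normal closure of $w_j^{r_j}$ depends only on the conjugacy class of $w_j$ (as $(gw g^{-1})^{r}=g w^{r}g^{-1}$), any choice of transport path yields the same quotient, so correctness of the presentation does not depend on the bookkeeping --- only its explicitness does. Second, the paper takes the line at infinity to be a member of $\A$, so some singular points lie on $L_\infty$ and require the separate treatment of Lemma \ref{meridian3}; your setup with a generic line at infinity avoids this but then $\mathbb{C}^2\setminus\A^{\mathrm{aff}}$ is no longer $M(\A)$ and you must reinstate the projective relation, a point your sketch does not address.
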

The explicit algorithm to produce these words follows from a modification of Randell's method and is given in section \ref{3}.
The special case with $r_l= 1, \infty$ can be seen as a quasi-projective surface $X$ where the divisors with coefficients equal to infinity are removed from $\bar{X}$. This is, $X$ is a partial compactification of $\bar{X}\setminus D$ by those $D_i$ with coefficient $a_i=1$ (Linear Arrangement Compactifications or LAC surfaces in what follows). LAC surfaces are our main object of study in Section \ref{sec:Lac} and \ref{sec:App}. We show in Section \ref{sec:Lac} that it suffices to give weight one only to exceptional divisors in order to obtain different groups others than those who can be obtained from an arrangement of less lines.

We can ask if the following condition is satisfied by $X$ a quasi-projective variety
\begin{itemize}
\item If $\#\pi_1(X)=+\infty$, there is a representation $\rho:\pi_1(X)\to \GL_N(\mathbb{C})$ with $N\in \mathbb{N}^*$, such that $\#\rho(\pi_1(X))=+\infty$. (See (\cite{Ey2}) for motivation and related questions for Kähler groups). 
\end{itemize} 
%This construction allows  to construct surfaces with first Betti number equal to zero and we relate it with a simplification of a question attributed a J.P. Serre:  is there an algebraic complex variety whose fundamental group is non trivial but has no non-trivial finite quotients? (See \cite{Serre} 1.4 for the Higman Group). 
No counterexample seems to be known. We give a negative answer in the case $N=1$ with $X$ a LAC surface in Theorem \ref{Counterexample}.
\begin{thm}[Thm \ref{Counterexample}]\label{Thm2} There exists a real arrangement of $6$ lines $\mathcal{B}$ (the complete quadrilateral) and a partial compactification $X$ of $\bar{X}\setminus D$ such that $\#\pi_1(X)=+\infty$ and $\#H_1(X)=4$.
\end{thm}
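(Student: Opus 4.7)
The plan is to take $\mathcal{B}$ to be the six lines $L_{ij}$ ($1\le i<j\le 4$) joining pairs of four points $a,b,c,d$ in general position in $\mathbb{P}^2$, producing four triple points (at $a,b,c,d$) and three double points (the intersections $L_{ab}\cap L_{cd}$, $L_{ac}\cap L_{bd}$, $L_{ad}\cap L_{bc}$ of complementary pairs). Let $\bar{X}$ be the blow-up of $\mathbb{P}^2$ at these seven points and take $X=\bar{X}\setminus\bigcup_{i<j}\tilde{L}_{ij}$, the partial compactification corresponding to weights $r_l=\infty$ on the six line strict transforms and $r_l=1$ on all seven exceptional divisors.

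First I would apply Theorem \ref{Thm1} to obtain a presentation of $\pi_1(X)$ with generators $\gamma_{ij}$, Randell's local relations, the projective relation, and seven added relations $\omega_{E_p}=1$. After fixing a real realization, the algorithm of Section \ref{3} expresses each $\omega_{E_p}$ as the product of the Randell meridians at $p$ in their angular cyclic order. The three double-point relations immediately give $\gamma_{cd}=\gamma_{ab}^{-1}$, $\gamma_{bd}=\gamma_{ac}^{-1}$, $\gamma_{bc}=\gamma_{ad}^{-1}$, and once each $\omega_{E_p}=1$ is imposed Randell's cyclic-commutativity relations collapse to $1=1$. The surviving content is then the four triple relations rewritten in $\gamma_{ab},\gamma_{ac},\gamma_{ad}$, together with the projective relation.

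Abelianizing handles $H_1$: the four triple relations become ``the sum of the three Randell meridians at each vertex of $K_4$ vanishes'', and combined with the doubles they force $2\gamma_{ab}=2\gamma_{ac}=2\gamma_{ad}=0$ together with $\gamma_{ab}+\gamma_{ac}+\gamma_{ad}=0$, so $H_1(X)\cong(\mathbb{Z}/2)^2$ and $|H_1(X)|=4$. For the non-abelian step, taking a symmetric real realization (for instance $a=(0,0),\,b=(2,0),\,c=(0,3),\,d=(4,5)$) the four triple relations yield four distinct expressions for $\gamma_{ad}$ in terms of $\gamma_{ab},\gamma_{ac}$; equating them pairwise forces $\gamma_{ab}^2=\gamma_{ac}^2=1$ and $\gamma_{ad}=\gamma_{ab}\gamma_{ac}$. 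The presentation thus reduces to $\langle\gamma_{ab},\gamma_{ac}\mid\gamma_{ab}^2=\gamma_{ac}^2=1\rangle\cong\mathbb{Z}/2\ast\mathbb{Z}/2\cong D_\infty$, the infinite dihedral group; it injects into $\GL_2(\mathbb{R})$ by sending $\gamma_{ab},\gamma_{ac}$ to two reflections whose product is a parabolic element, so $\pi_1(X)$ is infinite.

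The main obstacle will be making the third step fully rigorous: one must verify, using the precise form of the projective relation and of each $\omega_{E_p}$ produced by the Section \ref{3} algorithm, that the surviving relations are \emph{exactly} $\gamma_{ab}^2=\gamma_{ac}^2=1$, so that $\pi_1(X)$ is the full free product $\mathbb{Z}/2\ast\mathbb{Z}/2$ rather than a proper (possibly finite dihedral) quotient. The abelianization calculation forces any such quotient to have abelianization $(\mathbb{Z}/2)^2$, so only $D_{2k}$ with $k\in\{1,2,\dots,\infty\}$ are candidates; the non-abelian check is needed to rule out finite $k$, and should follow from the Section \ref{3} algorithm producing the projective relation as a consequence of the other relations in $D_\infty$. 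Since $\pi_1(X)$ is a topological invariant of the complex arrangement, the final answer is independent of the real realization used to run the algorithm.
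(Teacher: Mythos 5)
There is a genuine and fatal gap: the partial compactification you chose is not one that works. You keep \emph{all seven} exceptional divisors (weight $1$ on each of the four triple points and all three double points) and remove only the six strict transforms. With that choice the four triple relations combined with the three double-point substitutions do \emph{not} reduce to $\gamma_{ab}^2=\gamma_{ac}^2=1$ alone: pairing the triple relation at $a$ (which gives $\gamma_{ad}=(\gamma_{ac}\gamma_{ab})^{-1}$ up to conjugacy and ordering) with the one at $d$ (which gives $\gamma_{ad}=\gamma_{ab}\gamma_{ac}$ up to the same ambiguities) forces $(\gamma_{ab}\gamma_{ac})^{2}=1$ as well, and the group collapses to the Klein four-group. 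This is exactly the ``proper dihedral quotient'' danger you flag in your last paragraph, and it does materialize. One can check it against the paper's explicit data: killing the meridians of only \emph{five} singular points (the two finite triple points, two of the three double points, and one triple point at infinity) already yields $H=\langle u,v\mid u^2,(vu)^2\rangle\cong\mathbb{Z}/2\ast\mathbb{Z}/2$ with $u=x_2$, $v=x_3x_2$, and the meridian of the \emph{remaining} double point computes (via Lemma \ref{meridian3} applied to $\Gamma^{(5)}$) to $\gamma_{p_6}=(vu)^{2}$, a generator of the infinite cyclic normal subgroup of $D_\infty$. So adjoining the seventh and sixth exceptional divisors kills precisely the infinite part, and your $X$ has $\pi_1(X)\cong(\mathbb{Z}/2)^2=H_1(X)$ -- finite, hence no counterexample. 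Geometrically: under the pencil of conics through the four triple points, your $X$ fibers over the spherical orbifold $\mathbb{P}^1$ with three cone points of order $2$ (orbifold fundamental group of order $4$) with simply connected generic fiber, whereas the paper's $X$ omits the exceptional divisor over one double point (turning that cone point into a puncture, giving base $\mathbb{Z}/2\ast\mathbb{Z}/2$) and over one triple point (puncturing the fibers).

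The fix is to choose $I$ as in the paper: weight $1$ on the exceptional divisors over three of the four triple points and two of the three double points only, keeping weight $+\infty$ on the remaining double point, the remaining triple point, and all six lines. Your $H_1$ computation and the embedding of $D_\infty$ into $\GL_2(\mathbb{R})$ are fine once the group really is $\mathbb{Z}/2\ast\mathbb{Z}/2$, but establishing that requires carrying out the explicit word computation of Section \ref{3} (or the conic-pencil argument) for the correct $I$; it cannot be deduced from the abelianization plus symmetry, since the abelianization is the same $(\mathbb{Z}/2)^2$ for both choices.
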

 To prove Theorem \ref{Thm2} we use Theorem \ref{Thm1} and obtain $ \pi_1(X)=\mathbb{Z}/2\mathbb{Z}\ast \mathbb{Z}/2\mathbb{Z}$ which can be faithfully embedded in $\GL_2(\mathbb{C})$ hence satisfis the above condition with $N=2$.
This group can be seen to be induced by a regular map $f$ from $X$ to $\mathbb{P}^1$ minus three points, coming from a pencil of conics and having two double fibers, in fact $f_*:\pi_1(X)\to \pi_1(\mathcal{X}(\mathbb{P}^2,D,(2,+\infty,2))$ where $D=(1:0)+(1:-1)+(0:1)$, is an isomorphism.

At the end of section \ref{sec:App} we give, as another application of Theorem \ref{Thm1}, a presentation of some weighted LAC surfaces which are among the quotients of the ball by a uniform lattice in $PU(2,1)$ considered in  \cite{DM}. This method for obtaining the presentation was not found by the author in the literature. 

%\cite{Eyssidieux}

%In a series of papers, it is established the affirmative answer when the fundamental group has a representation in $\GL(n,\mathbb{C})$, for some $n$ with infinite image.

\section{Preliminaries}
We review the definitions and some properties of meridians and orbifolds. For the latter we follow \cite{Eyssidieux}.
\subsection{Meridians}
Let $M$ be a connected complex manifold, $H\subset M$ a hypersurface, $D$ an irreducible component of $H$ and $q\in M\setminus H$. Denote by $U=\{z\in \mathbb{C}\mid \abs{z}<2\}$ and let $f:U\to M$ be a holomorphic function such that:
\begin{enumerate}
\item $f^{-1}(H)=\{0\}$,
\item $f(0)=p$ is an smooth point of $H$ and $p\in D$,
\item $f'(0)\not\in T_p H$ where $T_p H$ is the tangent space of $H$ at $p$.
\end{enumerate}
Then $f\mid_{S^1}:S^1\to M\setminus H$ defines a free-homotopy class independent of $f$ where $S^1\subset U$ is the unit circle. A loop $\gamma\in \pi_1(M\setminus H,q)$ freely homotopic to $f|_{S^1}$ is called a \emph{meridian} of $D$ around $p$.

If $D$ is smooth, any other meridian of $D$ around a smooth point of $H$ is a conjugate of $\gamma$. Denoting by $H'=H\setminus D$, we have that the inclusion $i:M\setminus H \hookrightarrow M\setminus H'$ induces a morphism $i_*:\pi_1(M\setminus H,q)\to \pi_1(M\setminus H',q)$ whose kernel is the normal subgroup of $\pi_1(M\setminus H,q)$ generated by $\gamma$. By Van Kampen's theorem the normal subgroup generated by the set of meridians around each irreducible component of $H$ is the kernel of the map $\pi_1(M\setminus H,q)\to \pi_1(M,q)$ induced by the natural inclusion.

Suppose  $H=D$ is smooth and let $\gamma_D$ be a meridian. 
Denote by $\pi: \bar{M} \to M$  the blow up of $M$ at some $p\in D$ and let $E_p$ be the exceptional divisor.  Then $\pi^{-1}(\gamma_D)$ is a meridian of $E_p$ in $\bar{M}$.

\subsection{Orbifolds}

Let $M$ be a complex manifold and $D$ a smooth effective divisor. Let $r\in \mathbb{N}^*$ and consider $P\to M$ the principal $\mathbb{C}^*$-bundle attached to $\mathscr{O}_M(-D)$. The tautological section $\sigma_D\in H^0(M,\mathscr{O}_M(D))$ can be lifted to a holomorphic function $f_D:P\to\mathbb{C}$ satisfying $f_D(p\cdot \lambda)=\lambda f_D(p)$. Let $Y\subset P\times \mathbb{C}$ be the complex analytic space defined by the equation $z^r=f_D(p)$ where $z$ is a coordinate for $\mathbb{C}$. Since $D$ is smooth $Y$ is smooth too. The action of $\mathbb{C}^*$ can be extended to $Y$ in the following way: $(p,z)\cdot\lambda=(p\cdot \lambda^r,\lambda z)$. Then the complex analytic stack
$$M(\sqrt[r]{D}):=[Y_D/\mathbb{C}^*]$$ is an orbifold.
The non-trivial isotropy groups lie over the points in $D$ and are isomorphic to the group $\mu_r$ of $r$-roots of unity. 

We allow also the weight $+\infty$ by considering the manifold $M\setminus D$ as an stack $[M\setminus D]$ and write
$$M(\sqrt[+\infty]{D}):=[M\setminus D]. $$

Let $\bar{X}$ be a complex manifold and $D=\sum_{i=1}^l D_i$ be a simple normal crossing divisor, where $D_i$ is an irreducible component of $D$. For any choice of weights $r:=(r_1,\ldots,r_l)\in (\mathbb{N}^*\cup \{+\infty\})^l$ we can define the orbifold
$$\mathcal{X}(\bar{X},D,r):=\bar{X}(\sqrt[r_1]{D_1})\times_{\bar{X}}\cdots\times_{\bar{X}}\bar{X}(\sqrt[r_l]{D_l})$$
 Denoting by $X=\bar{X}\setminus D$, we can view $\mathcal{X}(\bar{X},D,r)$ as an orbifold (partial if some $r_i=+\infty$) compactification of $X$. Let $j_r:X\hookrightarrow \mathcal{X}(\bar{X},D,r)$ denote the natural open immersion. By fixing $q\in X$, it turns out that we can define $\pi_1(\mathcal{X}(\bar{X},D,r),q)$ and moreover it is the quotient of $\pi_1(X,q)$ by the normal subgroup generated by all $\gamma_i^{r_i}$, where $\gamma_i$ is a meridian around $D_i$ and $r_i\not = +\infty$. We obtain that ${j_r}_*:\pi_1(X,q)\to \pi_1(\mathcal{X}(\bar{X},D,r),q) $ is surjective.   As a particular case we have that if $r=(1,\ldots,1)$ then $\mathcal{X}(\bar{X},D,r)=\bar{X}$.

 Let $D_\infty:= \sum  D_j$ the sum of all irreducible component of $D$ such that $r_j=+\infty$. We can regard $\mathcal{X}(\bar{X},D,r)$ as $\mathcal{X}(\bar{X}\setminus D_\infty, D-D_\infty,r')$ where $r'$ consists of the same finite values that $r$. In particular, if $r'_i=1$ for all $i$ we have that $\mathcal{X}(\bar{X},D,r)=[\bar{X}\setminus D_\infty]$ and we write simply $\bar{X}\setminus D_\infty$.
\begin{defn}
Let $X$ be a smooth algebraic variety, $Y$ a projective curve, $D=\sum_{i=1}^l y_i$ a divisor on $Y$ and $r\in (\mathbb{N}^*)^l$. Consider the orbifold $\mathcal{X}(Y,D,r)$. A dominant algebraic morphism $f:X\to Y$ is said to be an \emph{orbifold morphism} if for all $y_i \in D$ the multiplicity of the fiber $f^*(y_i)$ is divisible by $r_i$.
\end{defn}

%\begin{thm}[Catanese, Cogolludo]
%An orbifold morphism $f:X\to Y$ induces a morphism $f_*:\pi_1(X,q)\to \pi_1^{orb}(Y)$. Moreover if the map has connected generic fiber the morphism is surjective.
%\end{thm} 

\section{Fundamental group}\label{sec:FundG}
\subsection{Modification of the method of Randell}\label{subsec:2.1}
\subsubsection{Elementary geometric bases}
Consider $n$ real points $\{x_1,x_2,\ldots,x_n\}\subset \mathbb{R}\subset \mathbb{C}$ such that $x_1<x_2<\ldots <x_n$. Fix $q\in \mathbb{R}\setminus \{x_1,\ldots, x_n\}$. Any oriented simple closed curve $C\subset\mathbb{C}\setminus \{x_1,\ldots,x_n\}$  is freely homotopic to a loop based at $q$.  Moreover, if it contains at least one $x_i$ in the bounded component that $C$ determines, there exists a simple path $\theta$ connecting $q$ and $C$ satisfying:
$$ \Im(\theta(t))<0 \text{ for } t\in (0,1). $$
If $C\cap \{\Im(z)<0\}$ is connected we call $C_q:=\theta \cdot C \cdot \theta^{-1}$ an \emph{elementary loop}. Here  $\Im$ denotes the imaginary part of a complex number. (We suppose the curve starts at a point with $\Im(z)\leq 0$).
\begin{rem}
We have made all the choices in order to have $C_q$ unique in $\pi_1(\mathbb{C}\setminus \{x_1,\ldots,x_n\},q)$.\end{rem}
\begin{figure}[ht]
\centering
\begin{tikzpicture}
\draw (0,0) node [right]{$q$};
\draw (.5,0) node [below]{$x_2$};
\draw (-.5,0) node [below] {$x_1$};
\draw (0,0) to[out=-90,in=90]  (0,-.60) ;
\draw [arc arrow=to pos 0.35 with length 2mm]  (-1,0) to[out=-90,in=-90] 
(1,0) [arc arrow=to pos 0.85 with length 2mm] 
to[out=90,in=90] node[above] {$C$} cycle;

\foreach \Point in {(0,0),(-.5,0), (0.5,0)}{
    \node at \Point {\textbullet};
}

\end{tikzpicture}
\label{elementaryloop}
\caption{Elementary loop $C_q$.}
\end{figure}
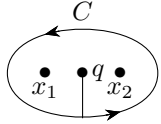
\begin{defn}
 An (ordered) \emph{geometric base} $\Gamma=(\gamma_1,\ldots,\gamma_n)$ for the group $\pi_1(\mathbb{C}\setminus \{x_1,\ldots,x_n\},q)$ is an $n$-tuple such that $\gamma_i$ is a meridian of $x_i$ based at $q$ and satisfying:
$$\gamma_n\cdot \gamma_{n-1}\cdots \gamma_{1}= \partial B(0,M)_q  $$
in $\pi_1(\mathbb{C}\setminus \{x_1,\ldots,x_n\},q)$, with $M>\abs{x_i}$ for all $i=1,\ldots,n$. The curve $\partial B(0,M)$ is a circle centered at $0$ with radius $M$ and oriented counterclockwise. We consider the product of loops from left to right.
\end{defn}

\begin{rem} The loop $\partial B(0,M)_q$ can be seen as the inverse of a meridian loop around the point at infinity.
\end{rem}
\noindent By abuse of notation we will write $\Gamma\subset \mathbb{C}$.
\begin{defn} An \emph{elementary geometric base} $\Gamma=(\gamma_1,\ldots,\gamma_n)$ is a geometric base  such that every $\gamma_i$ is an elementary loop.
\end{defn}
\begin{figure}[h]
\centering
\begin{tikzpicture} 
\draw (0,0) node [left]{$q$};
\draw (1.5,0) node [right]{$x_1$};
\draw (3,0) node [right]{$x_2$};
\draw (0,0) to[out=-90,in=-90] node[above]{$\gamma_1$}  (1,0) ;
\draw [arc arrow=to pos 0.35 with length 2mm] (1,0) to[out=-90,in=-90] 
(2,0) [arc arrow=to pos 0.85 with length 2mm] 
to[out=90,in=90] cycle;
\draw (0,0) to[out=-90,in=-90]node[below]{$\gamma_2 $}  (2.5,0) ;
\draw [arc arrow=to pos 0.35 with length 2mm] (2.5,0) to[out=-90,in=-90] 
(3.5,0) [arc arrow=to pos 0.85 with length 2mm] 
to[out=90,in=90] cycle;

\foreach \Point in {(0,0),(1.5,0), (3,0)}{
    \node at \Point {\textbullet};
}

\end{tikzpicture}
\label{elgeobas}
\caption{An elementary geometric base.}
\end{figure}
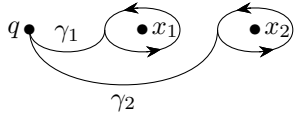
%Puede ser útil decir que hay 3 tipos de elementary geometric base y que el producto de dos elementos de estos hacen un tour alrededor de los puntos.
\begin{lem} Given $n$ real points and a base point as above, there is a unique \emph{elementary geometric base} $\Gamma$.
\end{lem}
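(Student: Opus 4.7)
The plan is to establish existence by exhibiting a canonical candidate and uniqueness via a winding-number argument combined with an isotopy argument. For existence, for each $i$ I would define $\alpha_i$ to be the elementary loop arising from the simple closed curve $C_i = \partial B(x_i, \epsilon)$ (counterclockwise) for $\epsilon > 0$ small enough that only $x_i$ lies in the bounded component, together with any simple path $\theta_i$ from $q$ into the open lower half-plane ending at the bottommost point of $C_i$. Such $\theta_i$ is isotopy-unique rel endpoints because $\{\Im(z)<0\}$ is simply connected and disjoint from $\{x_1,\ldots,x_n\}$. Each $\alpha_i$ is manifestly an elementary loop and a meridian of $x_i$. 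What remains for existence is the identity $\alpha_n\cdots\alpha_1 = \partial B(0,M)_q$, which I would verify by a direct deformation: the consecutive $\theta_{i+1}\cdot\theta_i^{-1}$ pieces together with the lower half-circles of the $C_i$ can be homotoped, inside the simply connected region $\{\Im(z)<0\}$, onto the lower arc of $\partial B(0,M)$; the upper half-circles of the $C_i$ can then be pushed outward in the upper half-plane to combine into the upper arc of $\partial B(0,M)$.

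For uniqueness, let $(\gamma_1,\ldots,\gamma_n)$ be an arbitrary elementary geometric base, and write $\gamma_i = (C_i)_q$ where $C_i$ is the underlying simple closed curve enclosing a subset $S_i \subseteq \{1,\ldots,n\}$. Since the winding number of $(C_i)_q$ around $x_j$ equals $1$ if $j\in S_i$ and $0$ otherwise, while the requirement that $\gamma_i$ be a meridian of $x_i$ forces the winding number of $\gamma_i$ to be $+1$ at $x_i$ and $0$ at each other $x_j$, I would conclude $S_i = \{i\}$. Thus every $C_i$ encloses only $x_i$. To finish I would show that, for fixed $i$, all elementary loops with this property represent the same class in $\pi_1(\mathbb{C}\setminus\{x_1,\ldots,x_n\},q)$: inside $\mathbb{C}\setminus(\{x_j\}_{j\neq i})$ the curve $C_i$ is isotopic to $\partial B(x_i,\epsilon)$ through a one-parameter family of simple closed curves each enclosing only $x_i$ and each having connected intersection with $\{\Im(z)<0\}$, and the connecting path is unique up to endpoint-fixing homotopy in the simply connected lower half-plane. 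This gives $\gamma_i = \alpha_i$.

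The main obstacle, I expect, is the isotopy step in the uniqueness argument: maintaining the condition that $C\cap\{\Im(z)<0\}$ stays connected throughout a deformation of simple closed curves enclosing a single real point requires some care. The cleanest way to handle this seems to be to parametrize each admissible $C$ as the concatenation of its upper arc (a simple arc in the closed upper half-plane joining two points of $\mathbb{R}$ that flank $x_i$) and its lower arc (a simple arc in the closed lower half-plane joining the same two points), then contract each arc separately to the corresponding arc of $\partial B(x_i,\epsilon)$, using that neither half-plane contains any $x_j$ for $j\neq i$.
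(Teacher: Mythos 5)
Your argument is correct and follows essentially the same route as the paper, whose proof simply declares the statement immediate from the ordering of $\Gamma$ (each $\gamma_i$ must be a meridian of $x_i$, which your winding-number computation makes precise) and the uniqueness of elementary loops (the paper's earlier Remark, which your arc-decomposition isotopy argument substantiates). You additionally spell out the existence half, which the paper leaves implicit; no gap.
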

\begin{proof}
Is immediate by the ordering of $\Gamma$ and the uniqueness of the elementary loops.
\end{proof}
\begin{rem} The notion of geometric base for $\pi_1((L\otimes \mathbb{C})\setminus P; q)$ depends only on the real oriented line $L$ and $P=\{x_1,\ldots,x_n\}\in L(\mathbb{R}), q\in L(\mathbb{R})$.
\end{rem}
\subsubsection{Randell's pencil} \label{randellspencil}
\begin{defn} A \emph{complex arrangement of lines} is an algebraic set $\A\subset \mathbb{P}^2$ whose irreducible components are complex lines. The arrangement $\A$ is said to be \emph{real} or \emph{to be defined over the reals} if the coefficients of all linear forms defining each line can be taken to be real.
\end{defn}
Denote by $M(\A):=\mathbb{P}^2\setminus \A$. We are going to review and adapt a method to compute a presentation for $\pi_1(M(\A))$ when $\A$ is real as in \cite{Randell}.

 Associate to each (projective) arrangement $\A$ an affine one, defined as fo\-llows: fix a line $L_\infty\in \A$ and consider it as a line at infinity, then $$\A^{\text{aff}}:=\A\cap (\mathbb{P}^2\setminus L_\infty)\cong\A\cap\mathbb{C}^2,$$ where we have chosen an isomorphism $h:\mathbb{C}^2\to\mathbb{P}^2\setminus L_\infty$. If we denote $M(\A^{\text{aff}}):=\mathbb{C}^2\setminus \A^{\text{aff}}$, we have the identification:
$$M(\A)= M(\A^{\text{aff}}).$$
Fixing $q\in M(\A^{\text{aff}})$ and denoting also by $q=h(q)$, we have:
$$\pi_1(M(\A),q)\cong \pi_1(M(\A^{\text{aff}}),q) .$$
Moreover if the arrangement $\A$ is real, we can associate it a planar graph (allowing rays) in $\mathbb{R}^2$. Suppose $\A^{\text{aff}}$ is the associated affine arrangement, then all multiple points lie in a real plane. Namely, if we consider $\mathbb{C}^2$ with coordinates $(z,w)=(x_1+iy_1,x_2+iy_2)$, the real plane is given by  $\{(z,w)\in \mathbb{C} \mid y_1=y_2=0\}\cong \mathbb{R}^2$. Set $\A(\mathbb{R}):=\A^{\text{aff}}\cap \mathbb{R}^2$ to be the set of real points of the arrangement $\A^{\text{aff}}$, denote by $M(\A(\mathbb{R})):=\mathbb{R}^2\setminus \A(\mathbb{R})$. Suppose there is no vertical line in $\A(\mathbb{R})$. Denote by $\Sing\A ^\bullet $ the multiple points of the corresponding arrangement $\A^\bullet=\A,\A^{\text{aff}},\A(\mathbb{R})$. 

%\medskip

 Consider $\mathbb{R}^2$ with coordinates $(x_1,x_2)$. We orient the non vertical  lines in $\mathbb{R}^2$ taking the positive direction to be that of decreasing $x_1$. 

Fix a base point $q=(q_1,q_2)$ in the lower right part of $M(\A(\mathbb{R}))$ further and lower than any point in $\Sing \A(\mathbb{R})$ and lower than any line. For a complex line $\Sigma\subset \mathbb{C}^2$ defined by an equation with real coefficients, denote by $\Sigma(\mathbb{R})$ its restriction to $\mathbb{R}^2$ and orient it as before if it is non-vertical.  Set $\Sigma^{(0)}:=\{(z,w) \mid z=q_1\}$, note that $\Sigma^{(0)}(\mathbb{R})$ is the vertical line passing through $q$, we orient it by taking as positive direction that of increasing $x_2$. For any triple $P\subset \Sigma(\mathbb{R}) \subset \Sigma$, where $P$ is a finite set of points, $\Sigma(\mathbb{R})$ a real oriented line and $\Sigma$ a complex line as before, we can consider an elementary geometric base $\Gamma\subset \Sigma$ of $\pi_1(\Sigma\setminus P,q)$ by fixing $q\in\Sigma(\mathbb{R})$.

 As $\Sigma^{(0)}(\mathbb{R})$ intersects all the lines of $\A(\mathbb{R})$, we can number $P=\Sigma^{(0)}(\mathbb{R})\cap \A(\mathbb{R})$ from bottom to top (given by the orientation chosen for $\Sigma^{(0)}(\mathbb{R}))$ and denote $\Gamma^{(0)}=\{\gamma_1^{(0)},\ldots, \gamma_n^{(0)}\} \subset \Sigma^{(0)}$ the associated elementary geometric base with base point $q$.

The idea to obtain a presentation for the fundamental group is to study how the elementary geometric base change when we rotate the line $\Sigma^{(0)}$ counterclockwise while fixing the base point $q$ and keep track of the relations arising. % To every irreducible smooth component of $\A(\mathbb{R})$ we can associate a conjugate of some $\gamma_i^{(0)}$ for $i\in \{1,\ldots, n\}$.

 %It can happen that there are some lines passing through $q$ parallel to some of those in $\A(\mathbb{R})$, this is studied in \ref{3}.
\begin{figure}[ht]
\centering
\begin{tikzpicture}[scale=1.5]
\draw (.5,-.4) node [below] {\scriptsize $q$};

\node at (-.11,.15){\tiny $p_1 $};
\node at (-.75,.28){\tiny $p_2 $};
\node at (-1.5,0){\tiny $p_3 $};
\node at (-.80,-.45){\tiny $p_4 $};
%%\draw (-1.5,0) -- (1.5,0) node[above,right]{\tiny $3$};
\draw (.05,1)node[above,right]{\scriptsize $3$} -- (-.05,-1);
\draw (-1,-3/4) -- (.5,3/8) node[above,right]{\scriptsize $2$};
\draw (-1,1/4) -- (.5,-1/6)node[above,right]{\scriptsize $1$};
\draw (-.55,1)node[above,right]{\scriptsize $4$} -- (-.65,-1);

\draw [line width=0.05mm,red ] (-.65,1) node [above]{\tiny $\Sigma^{(0)}$} -- (.6,-.8);
\draw [line width=0.05mm,red ] (-1,.55)node [above]{\tiny {$\Sigma^{(1)}$}} -- (.6,-.8);
\draw [line width=0.05mm,red ] (-1,.25)node [above,left]{\tiny {$\Sigma^{(2)}$}} -- (.6,-.8);
\draw [line width=0.05mm,red ] (-1,-.35)node [left]{\tiny {$\Sigma^{(3)}$}} -- (.6,-.8);
\foreach \Point in {(.6,-.8),(0,0),(-.6,.15),(-.6,-.45)}{
    \node at \Point {$\cdot$};
}

\draw [red] (-.4,.6) to[out=-150,in=100] 
(-.5,.3) [arc arrow=to pos 1 with length .5mm] ;

\draw [red] (-.7,0) to[out=-150,in=100] 
(-.8,-.3) [arc arrow=to pos 1 with length .5mm] ;
\end{tikzpicture}
\caption{Base point}
\label{fig:Base Point}
\end{figure}
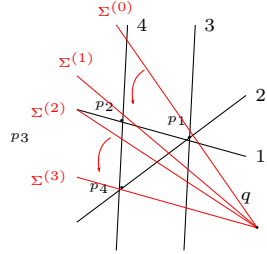

The set of lines passing through $q$, can be seen as $\mathbb{RP}^1$, which we parametrised by the angle with respect to the line $x_2=0$ (oriented in the positive sense), this is, a value in $[\pi/2,3\pi/2[$. To every real line $\Sigma(\mathbb{R})$ passing through $q$ we can associate its angle, which we denote by:
$$\theta(\Sigma(\mathbb{R}))\in [\pi/2,3\pi/2[. $$
For $t\in [\pi/2,3\pi/2[$, the line being parametrised by $t$ will be denoted by $\Sigma_t$.

In particular,  $\theta(\Sigma^{(0)})=\pi/2$. The elementary geometric base $\Gamma^{(0)}$ varies in a continuous way as we vary $t$.
There exists two types of directions where it changes:

\begin{enumerate}[label=\textbf{S.\arabic*}]
\item \label{en1} Those $t\in[\pi/2,3\pi/2[$ such that the associated $\Sigma_t$ contains a point in $\Sing \A(\mathbb{R})$,
\item \label{en2} Those $t\in[\pi/2,3\pi/2[$ such that $\Sigma_t$ is parallel to a line in $\A(\mathbb{R})$, which correspond to the points in $\Sing \A\cap L_\infty$.
\end{enumerate}

By a slight change of $q$, we can consider that no line passing through it contains two points of $\Sing \A$. Given $p\in\Sing\A$, denote by $\theta(p)$ the angle of the unique line passing through $p$ and $q$. Given $p, p'\in \Sing \A$, we define a total order by 
$$p<p' \quad \text{iff} \quad \theta(p)<\theta(p'). $$ Let us write $\Sing \A=\{p_1,\ldots, p_s\}$ with this order.

%\medskip
\subsubsection{Elementary geometric transition of regular fibers in Randell's pencil}\label{3.1.3}
 Fix a point $p_i\in\Sing \A$. Denote by $t_i=\theta(p_i)$. Choose $\varepsilon>0$ sufficiently small such that no $t\in [t_i-\varepsilon, t_i+\varepsilon]\setminus \{t_i\}$ is of type \ref{en1} or \ref{en2}. Let:
$$\Sigma^{(i-1)}:= \Sigma_{t_i-\varepsilon}, \quad  \Sigma^{(i)}:=\Sigma_{t_i+\varepsilon}.$$
This is, $\Sigma^{(i-1)}$ lies to the right and $\Sigma^{(i)}$ to the left of $p_i$. Recall that $\Sigma^{(i-1)}(\mathbb{R})$ is an oriented real line and by intersecting with $\A(\mathbb{R})$ we can consider the elementary geometric base: $$\Gamma^{(i-1)}=(\gamma_1^{(i-1)},\gamma_2^{(i-1)},\ldots,\gamma_n^{(i-1)})\subset \Sigma^{(i-1)},$$ similarly $$\Gamma^{(i)}=( \gamma_1^{(i)}, \gamma_2^{(i)}, \ldots, \gamma_n^{(i)}) \subset \Sigma^{(i)}.$$ 

A priori, we should take such geometric bases for every point $p_i$ but as there is no direction between $t_i$ and $t_{i+1}$ in which the geometric base changes, by continuity we will still write  $\Gamma^{((i+1)-1)}={\Gamma^{(i)}}$. 

\begin{rem}
In fact, only the points of type $\ref{en1}$ play a role in the presentation of $\pi_1(M(\A))$. The points of type \ref{en2} does not modify the meridians who are about to cross a point in $\Sing \A(\mathbb{R})$, they only change their numeration in the geometric base. These points are studied in section \ref{subsection2.2} and they are needed for the explicit form of the exceptional meridians given in Section \ref{3}.
\end{rem}

 As a simple example illustrating how $\Gamma^{(i-1)}$ and $\Gamma^{(i)}$ are related, we have the following Lemma, which can be found in \cite{Orlik} Lemma 5.73 (with other notation). Let $\A^{\text{aff}}=\{L_1,L_2\}$ be a pencil of two lines in $\mathbb{C}^2$ defined over $\mathbb{R}$ and $x_i=\Sigma^{(0)}(\mathbb{R})\cap L_i(\mathbb{R})$ for $i=1,2$ as in \ref{randellspencil}. 
\begin{lem}\label{looppassing} Consider $\Gamma^{(0)}=(\gamma_1^{(0)},\gamma_2^{(0)})\subset \Sigma^{(0)}$ the elementary geometric base associated to $P=\{x_1,x_2\}$ and suppose $q<x_1<x_2$. Then $\Gamma^{(1)}=(\gamma_1^{(1)},\gamma_2^{(1)})$ can be represented in $\Sigma^{(0)}$ and is given by
$\Gamma^{(1)}=({\gamma_1^{(0)}}^{-1}\gamma_2^{(0)}{\gamma_1^{(0)}},\gamma_1^{(0)})$. (See Figure \ref{fig:loop}). 
\end{lem}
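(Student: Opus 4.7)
The lemma computes the half-twist braid action on the elementary geometric basis that is induced as the pivoting line $\Sigma_t$ rotates counterclockwise past the node $p=L_1\cap L_2$. I would organise the argument in three steps.

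First, I parametrise the pencil of complex lines through $q$ by $\mathbb{CP}^1$ and locate on it $\Sigma^{(0)}$ and $\Sigma^{(1)}$ on either side of the unique line $\Sigma_{t_1}$ through $p$. The real rotation $t\in[t_1-\varepsilon,t_1+\varepsilon]$ can be replaced by a small complex arc from $\Sigma^{(0)}$ to $\Sigma^{(1)}$ in $\mathbb{CP}^1$ that detours around $\Sigma_{t_1}$. Along such an arc the intersection points $x_i(s):=\Sigma_s\cap L_i$ never collide and trace disjoint paths in $L_i\setminus\{p\}$, so the associated elementary meridians $\gamma_i^{(s)}$ based at $q$ form a continuous family of loops in $\mathbb{C}^2\setminus(L_1\cup L_2)$, realising a based homotopy between $\Gamma^{(0)}$ and $\Gamma^{(1)}$.

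Second, I analyse what a half-detour around $\Sigma_{t_1}$ does to the two strands. It produces the standard generator of the two-strand braid group, so it interchanges the real positions of the two intersections: the strand on $L_1$ moves from the lower real position of $\Sigma^{(0)}(\mathbb{R})$ to the upper real position of $\Sigma^{(1)}(\mathbb{R})$, and symmetrically for $L_2$. Consequently $\gamma_2^{(1)}$, the new upper meridian on $\Sigma^{(1)}$, is freely homotopic via the family to the old meridian around $L_1$, and its based representative back in $\Sigma^{(0)}$ is exactly $\gamma_1^{(0)}$. For $\gamma_1^{(1)}$, the transport of the meridian around $L_2$ from its upper position in $\Sigma^{(0)}$ down to the new lower position in $\Sigma^{(1)}$ requires a connecting arc that passes below the former $x_1$; pulled back to $\Sigma^{(0)}$, this turns the loop into the conjugate $(\gamma_1^{(0)})^{-1}\gamma_2^{(0)}\gamma_1^{(0)}$, as visible from Figure~\ref{fig:loop}.

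Finally, as a consistency check I verify compatibility with the pencil's meridian at infinity:
\[
\gamma_2^{(1)}\cdot\gamma_1^{(1)}=\gamma_1^{(0)}\cdot(\gamma_1^{(0)})^{-1}\gamma_2^{(0)}\gamma_1^{(0)}=\gamma_2^{(0)}\cdot\gamma_1^{(0)}=\partial B(0,M)_q.
\]
This matches the fact that the boundary of a large disc around $q$ in $\Sigma_t$ deforms continuously with $t$. The only delicate point is the direction of the conjugation: one must confirm that the connecting path for $\gamma_1^{(1)}$ passes $x_1$ on the side yielding conjugation by $\gamma_1^{(0)}$ rather than by its inverse, which is fixed by the counterclockwise sense of the rotation. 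This bookkeeping is where I expect most of the care to be needed, but it reduces to reading off the figure.
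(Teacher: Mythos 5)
Your argument is correct, and it is in fact more of a proof than the paper supplies: the paper does not prove Lemma \ref{looppassing} at all, but simply cites Orlik--Terao (Lemma 5.73) and points to Figure \ref{fig:loop}. Your braid-monodromy formulation --- replacing the real rotation through the critical direction by a small complex detour in the parameter $\mathbb{CP}^1$, so that the two intersection points never collide and trace a half-twist, and then transporting the elementary meridians along the resulting isotopy --- is exactly the standard argument underlying that citation, and your identification of which strand yields the unconjugated loop $\gamma_2^{(1)}=\gamma_1^{(0)}$ and which yields the conjugate $\gamma_1^{(1)}=(\gamma_1^{(0)})^{-1}\gamma_2^{(0)}\gamma_1^{(0)}$ is consistent with the figure and with the conventions of the paper (left-to-right composition, counterclockwise meridians). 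The verification that $\gamma_2^{(1)}\cdot\gamma_1^{(1)}=\gamma_2^{(0)}\cdot\gamma_1^{(0)}=\partial B(0,M)_q$ is a worthwhile addition, since preservation of the boundary class is exactly the constraint that pins the answer down to conjugation once the permutation of strands is known. The one point you defer --- whether the connecting arc passes $x_1$ on the side giving conjugation by $\gamma_1^{(0)}$ rather than by its inverse --- is genuinely the entire content of the lemma, and you leave it at the level of ``reading off the figure''; but that is precisely the level of rigor the paper itself adopts, and the sign you choose is the one forced by the counterclockwise rotation together with the product check, so nothing is wrong. To make the argument fully self-contained you would only need to note explicitly that the lower strand travels through the lower half-plane of the fiber (hence its elementary loop transports to an elementary loop) while the upper strand travels through the upper half-plane (hence its transported loop re-enters from above, producing the clockwise correction $(\gamma_1^{(0)})^{-1}\,\cdot\,\gamma_1^{(0)}$ around the point it overtakes).
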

\begin{figure}[ht]
\centering
\begin{tikzpicture}
\draw (0,0) to[out=0,in=180]node[below]{$\gamma_1^{(0)}$}  (1,0) ;
\draw [arc arrow=to pos 0.25 with length 2mm] (1,0) to[out=-90,in=-90] 
(2.5,0) [arc arrow=to pos 0.75 with length 2mm] 
to[out=90,in=90] cycle;

\draw (0,0)node [below]{$q$};
\draw (1.75,0) node [below] {$x_1$};
\draw (3.75,0) node [below]{$x_2$};
\draw (0,0) to[out=-45,in=-135] node[midway,below]{$\gamma_2^{(0)}$} (3,0);
\draw (0,0) to[out=45,in=135]node [midway,above]{$\gamma_1^{(1)}$} (3,0) ;

\draw [arc arrow=to pos 0.25 with length 2mm] (3,0) to[out=-90,in=-90] 
 (4.5,0) [arc arrow=to pos 0.75 with length 2mm]  
to[out=90,in=90] cycle;

\foreach \Point in {(0,0),(1.75,0), (3.75,0)}{
    \node at \Point {\textbullet};
}

\end{tikzpicture}

\caption{Loop passing}
\label{fig:loop}
\end{figure}
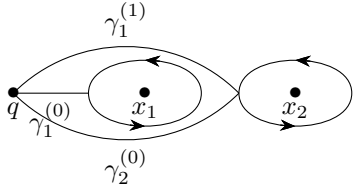
 We can interpret this Lemma as a loop passing a vertex and having a conjugation and a reordering of the lines as in Figure \ref{fig:conjugates}. 

The next example should be a pencil of $n$ lines, but in fact this is locally the general case as we will see in the following proposition. Let $p_i\in \Sing \A(\mathbb{R})$, $\Gamma^{(i-1)}$ and $\Gamma^{(i)}$ as before.
\begin{prop}\label{1eq} Let $j$ be the first index for which the meridian $\gamma_j^ {(i-1)}$ su\-rrounds a line which passes through $p_i$ and let $k$ be the last such index. Then we have:
$$\Gamma^{(i)}=(\gamma_1^{(i-1)},\ldots,\gamma_{j-1}^{(i-1)},{\gamma_j^{(i)}},\ldots,{\gamma_k^{(i)}},\gamma_{k+1}^{(i-1)},\ldots,\gamma_n^{(i-1)}), $$
where:
\begin{align*}
{\gamma_{k}^{(i)}}&=\gamma_{j}^{(i-1)}, \\
\gamma_{k-1}^{(i)}&={\gamma_{j}^{(i-1)}}^{-1}\gamma_{j+1}^{(i-1)}\gamma_{j}^{(i-1)},\\
&\ \ \!\vdots \\
\gamma_{j}^{(i)}&
={\gamma_{j}^{(i-1)}}^{-1}{\gamma_{j+1}^{(i-1)}}^{-1}\cdots{\gamma_{k-1}^{(i-1)}}^{-1}\gamma_{k}^{(i-1)}\gamma_{k-1}^{(i-1)}\ldots{\gamma_{j}^{(i-1)}}\gamma_{j}^{(i-1)}.
\end{align*}
And a set of relations in $\pi_1(M(\A),q)$:\footnote{These relations are stated as in \cite{Falk} p.142, where in a footnote he points to an error of \cite{Randell}.}
\begin{equation}\label{rpi}
R_{p_i}=\left\lbrace\gamma_{k}^{(i-1)}\gamma_{k-1}^{(i-1)}\cdots\gamma_{j}^{(i-1)} = \gamma_{\sigma(k)}^{(i-1)}\gamma_{\sigma(k-1)}^{(i-1)}\cdots\gamma_{\sigma(j)}^{(i-1)}=\ldots \right\rbrace
\end{equation}
where $\sigma$ runs over the set of cyclic permutations of $k-j+1$ elements.
\end{prop}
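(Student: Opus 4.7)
The plan is to argue locally near $p_i$. As $t$ varies in the small interval $[t_i-\varepsilon, t_i+\varepsilon]$, no other singular point or parallel direction is encountered, so the change in the geometric base is driven only by the $m := k - j + 1$ lines concurrent at $p_i$. For indices $l$ with $l < j$ or $l > k$, the real intersection $\Sigma_t(\mathbb{R}) \cap L_l(\mathbb{R})$ stays strictly below or above the cluster of intersections with lines through $p_i$ throughout the rotation. The corresponding elementary meridian deforms continuously without ever colliding with or crossing another meridian, so $\gamma_l^{(i)} = \gamma_l^{(i-1)}$; this accounts for the unchanged blocks at the left and right of the statement.

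For the middle indices I would restrict attention to the local pencil of the $m$ concurrent lines $L_j,\ldots,L_k$ through $p_i$. Their intersections with $\Sigma^{(i-1)}(\mathbb{R})$ appear in the order $L_j, L_{j+1},\ldots, L_k$ from bottom to top, while on $\Sigma^{(i)}(\mathbb{R})$ they appear in the reversed order $L_k,\ldots, L_j$. I would then apply Lemma \ref{looppassing} iteratively. The new topmost intersection on $\Sigma^{(i)}$ corresponds to the old $L_j$, whose elementary meridian is reachable by a path that never needs to go around the other intersections, giving $\gamma_k^{(i)} = \gamma_j^{(i-1)}$. The next new meridian $\gamma_{k-1}^{(i)}$, around the old $L_{j+1}$, has to pass underneath the (now reassigned) path to $L_j$, producing one conjugation by $\gamma_j^{(i-1)}$. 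Continuing, each $\gamma_{k-s}^{(i)}$ acquires $s$ nested conjugations by $\gamma_j^{(i-1)}, \ldots, \gamma_{j+s-1}^{(i-1)}$, culminating in the long formula for $\gamma_j^{(i)}$ as $\gamma_k^{(i-1)}$ conjugated by the full product.

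For the relations $R_{p_i}$ I would use the local model of $m$ complex lines through a single point in $\mathbb{C}^2$. Projecting from $p_i$ to the projectivized tangent space $\mathbb{P}^1$ exhibits the local complement as a $\mathbb{C}^*$-bundle over $\mathbb{P}^1$ minus $m$ points. The resulting short exact sequence of fundamental groups gives a presentation with $m$ meridional generators, in which the product $\gamma_k^{(i-1)}\gamma_{k-1}^{(i-1)}\cdots \gamma_j^{(i-1)}$ is central and hence invariant under cyclic permutation. Transporting these relations via the inclusion of a small ball around $p_i$ into $M(\A)$ produces exactly $R_{p_i}$.

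The main obstacle I expect is the bookkeeping in the iterated loop-passing step: one must verify that no extra conjugations by meridians outside $\{\gamma_j^{(i-1)},\ldots, \gamma_k^{(i-1)}\}$ arise, and that the nested conjugations occur in exactly the order stated. A clean alternative is to identify the transition as the half-twist $\Delta_m$ in the braid group $B_m$ acting by Artin automorphisms on the free group on local meridians, and to verify by induction on $m$ (with Lemma \ref{looppassing} as the base case $m=2$) that this action matches the formulas in the proposition.
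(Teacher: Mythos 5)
The paper gives no proof of this proposition at all: it is imported from Randell (with the correction noted by Falk), and the only supporting material supplied is the two-line case, Lemma \ref{looppassing}, together with the remark that the pencil case ``is locally the general case'' and the illustration in Figure \ref{fig:conjugates}. Your outline --- localization near $p_i$ so that only the consecutive block of indices $j,\dots,k$ is affected, iterated application of Lemma \ref{looppassing} to produce the nested conjugations, and the relations $R_{p_i}$ obtained from the centrality of the product $\gamma_k^{(i-1)}\cdots\gamma_j^{(i-1)}$ in the $\mathbb{C}^*$-bundle model of the local complement --- is exactly the argument the surrounding text gestures at, and it is correct.
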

%We can associate the $k$-tuple of meridians that crosses $p_i$, denote these by $\Gamma^{(i-1)}(p_i):=(\gamma_{i_1}^{(i-1)},\ldots,\gamma_{i_k}^{(i-1)})$ and $^{(i)}\Gamma(p_i):=(^{(i)}\gamma_{j_1},\ldots,^{(i)}\gamma_{j_k})$, reading the real graph from right to left (See Figure \ref{fig:conjugates}): 

%=(\gamma_{i_k}^{\gamma_{i_1}\cdots\gamma_{i_{k-1}}})^{(i-1)}
%=(\gamma_{i_k}^{\gamma_{i_1}\cdots\gamma_{i_{k-1}}})^{(i-1)}
\begin{figure}[ht]
\centering
\begin{tikzpicture}
\draw (0,0) node [below] {$p_i$};
\draw (-1.5,1) node[left] {$\gamma_{k}^{(i)}$ } -- (1.5,-1) node[right] { $\gamma_{j}^{(i-1)}$};
\draw (-1.5,.3) node[left] {$\gamma_{k-1}^{(i)}$   } -- (1.5,-0.3) node[right] { $\gamma_{j+1}^{(i-1)}$};

\draw (-1.5,-.3)node[left] {$\vdots$ } --(1.5,0.3) node[right] { $\vdots$};
\draw (-1.5,-1)node[left] {$\gamma_{j}^{(i)}$ } --(1.5,1) node[right] { $\gamma_{k}^{(i-1)}$};
\draw [red](-1.5,1.2)node[above]{$\Sigma^{(i)} $} -- (-1.5,-1.2);
\draw [red](1.5,1.2)node[above] {$\Sigma^{(i-1)} $} -- (1.5,-1.2);
\end{tikzpicture}
\caption{Conjugates} 
\label{fig:conjugates}
\end{figure}
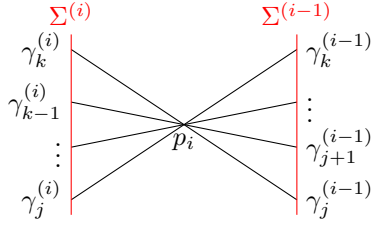
%where the notation ${(\gamma_{i_2}^{\gamma_{i_1}})}^{(i)}$ means ${(\gamma_{i_1}^{(i)})}^{-1}\gamma_{i_2}^{(i)} \gamma_{i_1}^{(i)}$.

Expressing every meridian in terms of the geometric base $\Gamma^{(0)}$ by means of  Proposition \ref{1eq} and replacing in (\ref{rpi}) we obtain:
\begin{thm}[\cite{Randell}]\label{presentationRandell} The fundamental group of 
$M(\A)$ admits a presentation:
$$\pi_1(M(\A),q)\cong\left\langle \gamma_1^{(0)},\gamma_2^{(0)},\ldots, \gamma_n^{(0)}\biggm| \bigcup_ {p_i \in\Sing(\A(\mathbb{R}))}R_{p_i} \right\rangle. $$
\end{thm}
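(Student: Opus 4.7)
The plan is to apply a Zariski–van Kampen type argument to the pencil of complex lines through $q$, with Proposition~\ref{1eq} providing the explicit monodromy action at each singular direction.

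First, one shows that the inclusion $\Sigma^{(0)}\setminus (\Sigma^{(0)}\cap\A) \hookrightarrow M(\A^{\text{aff}})$ induces a surjection on fundamental groups. Blowing up $\mathbb{P}^2$ at $q$ produces a $\mathbb{P}^1$-bundle over $\mathbb{P}^1$ whose fibers contain the complex lines $\{\Sigma_t\}$; a standard Lefschetz-type argument, or a direct homotopy using the fact that the $\Sigma_t$ sweep out $\mathbb{C}^2$, gives the surjection. Since $\Sigma^{(0)}\setminus(\Sigma^{(0)}\cap\A)$ is a complex line minus $n$ points, the source is the free group on the elementary geometric base $(\gamma_1^{(0)},\ldots,\gamma_n^{(0)})$.

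Second, I would identify the kernel as the normal closure of the monodromy relations over the critical values of the pencil in $\mathbb{CP}^1$. Because $\A$ is real, all critical values lie in $\mathbb{RP}^1$ and split into the types \textbf{S.1} and \textbf{S.2} of Section~\ref{randellspencil}; by the remark following Proposition~\ref{1eq}, directions of type \textbf{S.2} only permute the meridians and contribute no relation, so only the $t_i$ with $p_i \in \Sing\A(\mathbb{R})$ have to be processed. For such $t_i$, Proposition~\ref{1eq} explicitly describes the braid-like identification $\Gamma^{(i-1)} \to \Gamma^{(i)}$. Transporting a meridian along a small loop in $\mathbb{CP}^1$ encircling $t_i$ gives a loop in $M(\A)$ that, up to conjugation, is a meridian around the exceptional divisor of the blow-up of $p_i$; demanding that these two descriptions agree on every meridian produces exactly the cyclic-permutation identities $R_{p_i}$, since the local model at $p_i$ is a central pencil of $k-j+1$ complex lines in $\mathbb{C}^2$ whose complement is known to have fundamental group presented by those cyclic relations.

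Finally, iterating Proposition~\ref{1eq} rewrites every generator $\gamma_r^{(i-1)}$ appearing in $R_{p_i}$ as a word in $\gamma_1^{(0)},\ldots,\gamma_n^{(0)}$, producing the stated presentation. The step I expect to be the main obstacle is the completeness part: showing that no further relation is needed beyond the $R_{p_i}$. Concretely, one must verify that after $t$ traverses all of $[\pi/2, 3\pi/2)$ the composed monodromy returns $\Gamma^{(0)}$ to itself consistently with the meridian at infinity, and that the closing-up identity, expressing the inverse of the meridian at infinity as $\gamma_n^{(0)}\cdots\gamma_1^{(0)}$, is already a consequence of the $R_{p_i}$. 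This compatibility is what makes the real structure and the careful angular ordering of Section~\ref{randellspencil} indispensable, and is the point where a Zariski–van Kampen type theorem has to be invoked in its full strength.
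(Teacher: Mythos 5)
Your overall strategy (Zariski--van Kampen applied to a pencil of lines through $q$, with Proposition~\ref{1eq} supplying the local monodromy) is reasonable, and it is in fact more than the paper attempts: the paper states this theorem as a quotation of \cite{Randell} (with the correction of \cite{Falk}) and gives no proof beyond the instruction to rewrite the relations (\ref{rpi}) in terms of $\Gamma^{(0)}$. Your surjectivity step and your identification of the relations at a point of type \ref{en1} with the cyclic identities $R_{p_i}$ (via the local model of a central pencil) are both correct. One inaccuracy along the way: parallel transport of a fiber meridian around $t_i$ yields the monodromy image $\mu_{t_i}(\gamma)$, which is again an element of $\pi_1$ of the generic fiber; it is not a meridian of the exceptional divisor over $p_i$ --- that meridian is the product $\gamma_k^{(i-1)}\cdots\gamma_j^{(i-1)}$ of Lemma~\ref{finite}, a related but different object.

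The genuine gap is the treatment of the directions of type \ref{en2}. For the pencil through $q$ these are honest critical values of the fibration $\Bl_q\mathbb{P}^2\setminus\A\to\mathbb{P}^1$: over such a $t_i$ the last $k$ punctures of the fiber collide with the puncture on $L_\infty$, and the local monodromy is a full twist on those $k+1$ points. The induced relations say that $\gamma_\infty^{(i-1)}\gamma_n^{(i-1)}\cdots\gamma_{n-k+1}^{(i-1)}=\bigl(\gamma_{n-k}^{(i-1)}\cdots\gamma_1^{(i-1)}\bigr)^{-1}$ commutes with each of $\gamma_{n-k+1}^{(i-1)},\ldots,\gamma_n^{(i-1)}$. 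These are \emph{not} vacuous in the free group: for two pairs of parallel lines plus $L_\infty$, the relation at the triple point $L_1\cap L_3\cap L_\infty$ forces a conjugate of the product of the meridians of $L_2$ and $L_4$ to commute with $\gamma_1$ and $\gamma_3$, which holds in $\pi_1\cong F_2\times F_2$ but not in $F_4$. So they must be shown to be consequences of the $R_{p_i}$, and the Remark of Section~\ref{3.1.3} that you invoke does not do this --- it only records that the geometric base is renumbered and conjugated as in (\ref{eq:gbinf2}), which concerns the half-transition of the basis, not the vanishing of the monodromy relations. Proving that redundancy directly is essentially the ``completeness'' problem you defer at the end, so as written the argument is circular at its key point. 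The standard escape, and the route of \cite{Randell} itself, is to use the other pencil: project from a point of $L_\infty$ not on $\A$ (translate a vertical line rather than rotate one about $q$). Then $L_\infty$ is an entire fiber, the map restricts to $M(\A)=M(\A^{\mathrm{aff}})\to\mathbb{C}$, the only critical values are the finite singular points, and your argument closes with nothing left to discharge. The rotating pencil is what the paper needs later to locate the exceptional meridians, but it is the wrong pencil for proving this presentation from scratch.
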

\subsection{Meridians crossing a point at infinity}\label{subsection2.2}
 Let us describe the change in the geometric base when it traverses a singular point at infinity. Let $p_i\in \Sing \A\cap L_\infty$ and $\Gamma^{(i-1)}\subset \Sigma^{(i-1)}$ and $\Gamma^{(i)}\subset \Sigma^{(i)}$ be given as in section \ref{3.1.3}. This is: $$\Gamma^{(i-1)}=( \gamma_1^{(i-1)},\ldots, {\gamma_{n-k+1}^{(i-1)}}, \ldots, {\gamma_n^{(i-1)}}),$$ and $$\Gamma^{(i)}= (\gamma_1^{(i)},\ldots, \gamma_k^{(i)},\gamma_{k+1}^{(i)},\ldots, \gamma_n^{(i)}).$$
 \begin{prop}\label{Prop:3.5}
   Assume that there are exactly $k$ parallel lines in $\A(\mathbb{R})$ whose corresponding lines in $\A$ intersect at $p_i$. Then these lines are associated to the last $k$ meridians ${\gamma_{n-k+1}^{(i-1)}}, \ldots, {\gamma_n^{(i-1)}}$ of $\Gamma^{(i-1)}$.
   \end{prop}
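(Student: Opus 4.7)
The plan is to analyse explicitly the signed parameter $s_j(t)$ defined by $\Sigma_t(\mathbb{R}) \cap L_j(\mathbb{R}) = \{q + s_j(t)(\cos t, \sin t)\}$ for every line $L_j \in \A(\mathbb{R})$, and show that as $t \to t_i^-$ this parameter tends to $+\infty$ for the $k$ parallel lines while remaining bounded for every other line. Since the elementary geometric base is numbered by the intrinsic orientation of $\Sigma^{(i-1)}(\mathbb{R})$---whose positive direction is that of decreasing $x_1$, hence of increasing $s$---this is exactly what is needed to identify the $k$ parallel-line meridians as the last $k$ entries of $\Gamma^{(i-1)}$.

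First, I would write each line in the standard form $L_j\colon y = m_j x + b_j$ (possible since no vertical line is allowed in $\A(\mathbb{R})$) and solve the intersection equation, obtaining
\[
s_j(t) \;=\; \frac{m_j q_1 + b_j - q_2}{\sin t - m_j \cos t} \;=\; \frac{m_j q_1 + b_j - q_2}{\sqrt{1+m_j^2}\,\sin(t - \arctan m_j)}.
\]
The numerator is strictly positive: this is exactly the condition that $q$ lies below each line at $x_1 = q_1$, built into the choice of base point. The denominator vanishes precisely at $t = \arctan m_j + \pi$, the unique value in $(\pi/2, 3\pi/2)$ for which $\Sigma_t$ is parallel to $L_j$, and changes sign across it; thus $s_j(t) > 0$ on $[\pi/2, \arctan m_j + \pi)$ and $s_j(t) < 0$ on $(\arctan m_j + \pi, 3\pi/2)$.

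Next, I would apply this formula to the $k$ parallel lines $L_{j_1}, \ldots, L_{j_k}$ meeting at $p_i$: they share a common slope $m$ and therefore a common angle $t_i = \arctan m + \pi$. For $t = t_i - \varepsilon$ with $\varepsilon > 0$ small, the $k$ denominators are all equal and tend to $0^+$ while the numerators remain fixed positive constants, so $s_{j_l}(t_i - \varepsilon) \to +\infty$. For every other line $L_{j'} \in \A(\mathbb{R})$, its parallelism angle differs from $t_i$, so by the choice of $\varepsilon$ (no direction of type \ref{en2} lies in $[t_i - \varepsilon, t_i + \varepsilon] \setminus \{t_i\}$) the denominator stays bounded away from $0$ on this interval, and $|s_{j'}(t_i - \varepsilon)|$ is uniformly bounded in $\varepsilon$. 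Shrinking $\varepsilon$ if necessary, the $k$ parallel-line values strictly dominate every non-parallel value, so in the ordering by increasing $s$ they occupy precisely the positions $\gamma_{n-k+1}^{(i-1)}, \ldots, \gamma_n^{(i-1)}$.

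The main difficulty is the sign analysis, namely ruling out that the $k$ parallel intersections drift to $-\infty$ instead of $+\infty$: this requires carefully combining the orientation convention (positive direction on $\Sigma_t$ is that of decreasing $x_1$) with the hypothesis that $q$ be lower than every line at $x_1 = q_1$. Once positivity of the numerator is pinned down, the rest is a direct consequence of the simple pole of $s_j(t)$ at the parallelism angle.
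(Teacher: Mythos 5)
Your argument is correct, and it takes a genuinely more explicit route than the paper's. The paper argues softly: the limiting line $\Sigma_{t_i}$ meets only the $n-k$ non-parallel lines, and since no direction of type \ref{en1} or \ref{en2} lies strictly between $\Sigma^{(i-1)}$ and $\Sigma_{t_i}$, the $j$-th intersection point of $\Sigma^{(i-1)}(\mathbb{R})$ must lie on the same line as the $j$-th intersection point of $\Sigma_{t_i}(\mathbb{R})$ for $j\le n-k$, which forces the $k$ parallel lines into the last $k$ slots. Your computation of the signed parameter $s_j(t)$, with its simple pole at the parallelism angle and its sign controlled by the choice of base point below all lines, proves the same statement but in addition pins down the one point the paper's continuity argument leaves implicit: that the $k$ escaping intersection points drift to $+\infty$ (the end of the ordering) rather than to $-\infty$ (the beginning), which is exactly the content of the positivity of the numerator $m_jq_1+b_j-q_2$. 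The only cosmetic remark concerns your final step of ``shrinking $\varepsilon$ if necessary'': since no direction of type \ref{en1} or \ref{en2} lies in $(t_i-\varepsilon,t_i)$, the ordering of $\Sigma_t(\mathbb{R})\cap\A(\mathbb{R})$ along the oriented line is constant for $t$ in that whole interval, so the conclusion obtained for small $\varepsilon$ automatically holds for every admissible $\varepsilon$; one sentence to that effect would make the proposition hold as stated rather than only after a further shrinking.
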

   \begin{proof}
Let $t_i=\theta(p_i)$ and $\Sigma_{t_i}$ be the line passing by $q$ and $p_i$. Using the order of the real lines write $\Sigma^{(i-1)}(\mathbb{R})\cap \A(\mathbb{R})=\{y_1,\ldots,y_n\}$ and as no other point of $\Sing \A$ different from $p_i$ lies in $\Sigma_{t_i}$ we have that $\Sigma_{t_i}\cap \A(\mathbb{R})=\{x_1,\ldots,x_{n-k}\}$. In fact, it must be the case that $x_i$ and $y_i$ are in the same line of $\A(\mathbb{R})$ otherwise a point of type \ref{en1} or \ref{en2} would lie between $\Sigma^{(i-1)}$ and $\Sigma_{t_i}$ which can not happen.
   \end{proof}
  \begin{cor} We have the following identifications in $\Sigma_{t_i}$:
 \begin{align}\label{eq:gbinf1}
 \gamma_{k+1}^{(i)}= {\gamma_1^{(i-1)}}, \quad \ldots \quad ,\gamma_n^{(i)}={\gamma_{n-k}^{(i-1)}}.
 \end{align}
  \end{cor}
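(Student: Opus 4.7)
The plan is to show that on the line $\Sigma_{t_i}$ the two elementary geometric bases carried in from $\Sigma^{(i-1)}$ and $\Sigma^{(i)}$ agree termwise at the advertised indices. Since $\Sigma_{t_i}(\mathbb{R})\cap \A(\mathbb{R})=\{x_1,\ldots,x_{n-k}\}$ by the proof of Proposition \ref{Prop:3.5}, the elementary geometric base of $\pi_1(\Sigma_{t_i}\setminus\{x_1,\ldots,x_{n-k}\},q)$ has exactly $n-k$ elements, which by the uniqueness lemma for elementary geometric bases is determined by the ordered data $(q;x_1,\ldots,x_{n-k})$.

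First I would apply Proposition \ref{Prop:3.5} on both sides of $t_i$. On $\Sigma^{(i-1)}$ the $k$ parallel lines give the last $k$ meridians of $\Gamma^{(i-1)}$, so $\gamma_1^{(i-1)},\ldots,\gamma_{n-k}^{(i-1)}$ are the meridians around the intersections of $\Sigma^{(i-1)}(\mathbb{R})$ with the $n-k$ non-parallel lines, listed in the induced order. After crossing $t_i$, the intersections with these $k$ parallel lines have swept through the point at infinity $p_i$ and now appear at the \emph{beginning} of the ordered intersection sequence on $\Sigma^{(i)}(\mathbb{R})$; hence the remaining $n-k$ intersections are numbered $k+1,\ldots,n$ and contribute the meridians $\gamma_{k+1}^{(i)},\ldots,\gamma_n^{(i)}$, in the same relative order as before. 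This matches both lists with the common ordered set $\{x_1,\ldots,x_{n-k}\}\subset \Sigma_{t_i}(\mathbb{R})$.

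Next I would use a continuity argument. For every non-parallel line $L\in \A$, the intersection $\Sigma_t\cap L$ depends continuously on $t\in[t_i-\varepsilon,t_i+\varepsilon]$ and stays uniformly bounded, away from $p_i$. Hence one obtains a continuous $1$-parameter family of triples (base point $q$, real oriented line $\Sigma_t(\mathbb{R})$, finite point set $\Sigma_t(\mathbb{R})\cap \A(\mathbb{R})\setminus\{\text{parallel-line points}\}$) for which the construction producing the elementary loops (path $\theta$ in the lower half-plane together with the small circle) can be performed continuously. The elementary loops so transported from $\Sigma^{(i-1)}$ and from $\Sigma^{(i)}$ both land in $\Sigma_{t_i}$ as elementary geometric bases for $(q;x_1,\ldots,x_{n-k})$, and uniqueness forces them to coincide; this gives the $n-k$ identifications of the corollary.

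The main obstacle is making the continuous-transport step clean. Because the parallel lines' intersections with $\Sigma_t$ diverge as $t\to t_i$, one must check that their disappearance does not interfere with the chosen $\theta$-paths of the other, bounded, intersections; this is handled by shrinking $\varepsilon$ so that for every non-parallel intersection the defining data (the small disk around it and the path $\theta$ from $q$) lie in a region uniformly disjoint from the diverging parallel points. Once this is in place, the identifications reduce to the uniqueness statement for elementary geometric bases already proved in the excerpt.
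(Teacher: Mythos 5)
Your argument is essentially the paper's: both identify the orderings via Proposition \ref{Prop:3.5} (the $k$ parallel lines occupy the last $k$ slots of $\Gamma^{(i-1)}$ and, after the counter-clockwise turn, the first $k$ slots of $\Gamma^{(i)}$, so the remaining intersection points pair up line-by-line), and then conclude by transporting the elementary loops to $\Sigma_{t_i}$ using continuity and the uniqueness of the elementary geometric base. The extra care you take about the diverging parallel intersections elaborates a step the paper leaves implicit, but it is not a different route.
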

  \begin{proof}
As we are turning counter-clockwise, by the orientation given to $\Sigma^{(i)}(\mathbb{R})$ it will intersect first the $k$ parallel lines associated to $p_i$ and then, by the same argument as in Proposition \ref{Prop:3.5}, the point in the position $k+j$ of $\Sigma^{(i)}(\mathbb{R})\cap \A(\mathbb{R})$ lies in the same line as $x_j$ . 
  \end{proof}
\begin{prop} The last $k$ meridians in $\Gamma^{(i-1)}$ invert their order to fit in the first $k$ places of $\Gamma^{(i)}$. By doing so a conjugation for all the precedent meridians is needed (see figure \ref{baseinfty}).  More precisely we have:
\begin{align}\label{eq:gbinf2}
\gamma_{k}^{(i)}&={\gamma_1^{(i-1)}}^{-1}\cdots {\gamma_{n-k}^{(i-1)}}^{-1}\ {\gamma_{n-k+1}^{(i-1)}}\ {\gamma_{n-k}^{(i-1)}}\cdots {\gamma_1^{(i-1)}},\nonumber \\
\gamma_{k-1}^{(i)}&={\gamma_1^{(i-1)}}^{-1}\cdots {\gamma_{n-k+1}^{(i-1)}}^{-1}\ {\gamma_{n-k+2}^{(i-1)}}\ {\gamma_{n-k+1}^{(i-1)}}\cdots {\gamma_1^{(i-1)}},\nonumber \\
&\ \ \!  \vdots \nonumber\\
\gamma_{1}^{(i)}&={\gamma_1^{(i-1)}}^{-1}\cdots {\gamma_{n-1}^{(i-1)}}^{-1}\ {}\gamma_{n}^{(i-1)}\ {\gamma_{n-1}^{(i-1)}}\cdots {\gamma_1^{(i-1)}}.
\end{align}
\end{prop}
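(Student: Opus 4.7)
The identities (\ref{eq:gbinf1}) were established in the preceding Corollary, so I need only verify the conjugation formulas (\ref{eq:gbinf2}) for $\gamma_1^{(i)},\ldots,\gamma_k^{(i)}$. My plan is to fix $j\in\{1,\ldots,k\}$, follow the $j$-th parallel line $L_{n-k+j}$ as $\Sigma_t$ rotates counterclockwise from $\Sigma^{(i-1)}$ past $\Sigma_{t_i}$ to $\Sigma^{(i)}$, and describe a continuous deformation of the elementary loop $\gamma_{n-k+j}^{(i-1)}$ in $\mathbb{P}^2\setminus\A$, based at $q$, that ends at $\gamma_{k-j+1}^{(i)}$.

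The geometric picture is that the intersection $L_{n-k+j}\cap\Sigma_t(\mathbb{R})$ escapes to $+\infty$ in the positive direction of $\Sigma_t(\mathbb{R})$ as $t\nearrow t_i$, passes through $p_i\in L_\infty$ at $t=t_i$, and reappears from the opposite infinite end for $t>t_i$, finally landing at position $k-j+1$ on $\Sigma^{(i)}(\mathbb{R})$. During this motion the small circle encircling the intersection is carried along, and the path from $q$ to that circle must continuously avoid every other intersection of $\Sigma_t$ with $\A$. The intersections that cannot be avoided are precisely those at positions $1,\ldots,n-k+j-1$ on $\Sigma^{(i-1)}(\mathbb{R})$: the $n-k$ non-parallel ones (which persist throughout), together with the $j-1$ parallel ones of smaller index, which will land at positions $k,k-1,\ldots,k-j+2$ on $\Sigma^{(i)}(\mathbb{R})$ and hence lie between the moving loop and its final destination near $q$. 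Each such passage is a loop-passing of the type analyzed in Lemma \ref{looppassing} and contributes a conjugation by the corresponding $\gamma_\ell^{(i-1)}$; composing them in the order $\ell=n-k+j-1,n-k+j-2,\ldots,1$ (so that $\gamma_{n-k+j-1}^{(i-1)}$ lies innermost and $\gamma_1^{(i-1)}$ outermost in the resulting word) gives exactly the expression in (\ref{eq:gbinf2}).

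I expect the main obstacle to be rigorously justifying that $k$ independent passages can be tracked even though the $k$ intersections all hit infinity at the same value $t=t_i$. The cleanest fix is to perturb $q$ to a generic nearby $q'$ so that the $k$ parallel lines, viewed from $q'$, acquire $k$ distinct directions $t_i^{(1)}<\cdots<t_i^{(k)}$, each a crossing of type \ref{en2} with a single parallel line; the problem then reduces to $k$ successive applications of the case $k=1$, and one checks that the composed formulas stabilize in the limit $q'\to q$. An intrinsic alternative is to view $p_i$ as a multiple point of the full projective arrangement $\A$ (recall $L_\infty\in\A$) and apply Proposition \ref{1eq} in the augmented elementary base obtained by adjoining the meridian $\mu^{(i-1)}$ around $\Sigma^{(i-1)}\cap L_\infty$, then eliminate $\mu^{(i-1)}$ through the global relation $\gamma_n^{(i-1)}\cdots\gamma_1^{(i-1)}=\partial B(0,M)_q$, which expresses the inverse of a meridian around infinity.
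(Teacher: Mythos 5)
Your argument is essentially the paper's own proof: the paper disposes of (\ref{eq:gbinf2}) in one line, by repeated iteration of Lemma \ref{looppassing} together with the uniqueness of the elementary geometric base, which is exactly the loop-passing/deformation bookkeeping you spell out (and your count of unavoidable crossings at positions $1,\ldots,n-k+j-1$, with the stated nesting order, reproduces the formula correctly). The additional care you take with the simultaneous escape of the $k$ parallel intersections to infinity---via a perturbation of $q$ or via Proposition \ref{1eq} applied to $p_i\in L_\infty$ with the meridian at infinity adjoined---goes beyond what the paper records, but it patches a degeneracy the paper glosses over rather than changing the method.
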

\begin{proof}
 By repeated iterations of Lemma \ref{looppassing} we obtain (\ref{eq:gbinf2}). The result follows by unicity of the elementary geometric base.
 \end{proof} 
 \begin{figure}[ht]
\centering
\begin{tikzpicture}
\draw (0,0) to[out=0,in=180]node[above]{$^{(i)}\gamma_1$}  (1,0) ;
\draw [arc arrow=to pos 0.25 with length 2mm] (1,0) to[out=-90,in=-90] 
(2.5,0) [arc arrow=to pos 0.75 with length 2mm] 
to[out=90,in=90] cycle;

\draw (0,0)node [below]{$q$};
\draw (1.75,0) node [below] {$x_1$};
\draw (3.75,0) node [below]{$x_2$};
\draw (5.75,0) node [below]{$x_3$};
\draw (7.75,0) node [below]{$x_4$};
\draw (0,0) to[out=-45,in=-135] node[midway,below]{$^{(i)}\gamma_2$} (3,0);
\draw (0,0) to[out=45,in=135] node [midway,right]{$\gamma_2^{(i)}$}  (5,0) ;
\draw (0,0) to[out=45,in=135]node [midway,above]{$\gamma_1^{(i)}$} (7,0) ;

\draw [arc arrow=to pos 0.25 with length 2mm] (3,0) to[out=-90,in=-90] 
 (4.5,0) [arc arrow=to pos 0.75 with length 2mm]  
to[out=90,in=90] cycle;
\draw [arc arrow=to pos 0.25 with length 2mm] (5,0) to[out=-90,in=-90]  (6.5,0) [arc arrow=to pos 0.75 with length 2mm]  
to[out=90,in=90] cycle;
\draw [arc arrow=to pos 0.25 with length 2mm] (7,0) to[out=-90,in=-90] 
 (8.5,0) [arc arrow=to pos 0.75 with length 2mm]  
to[out=90,in=90] cycle;

\foreach \Point in {(0,0),(1.75,0), (3.75,0), (5.75,0), (7.75,0)}{
    \node at \Point {\textbullet};
}

\end{tikzpicture}

\caption{Loops crossing a point \ref{en2}.}
\label{baseinfty}
\label{loop}
\end{figure}
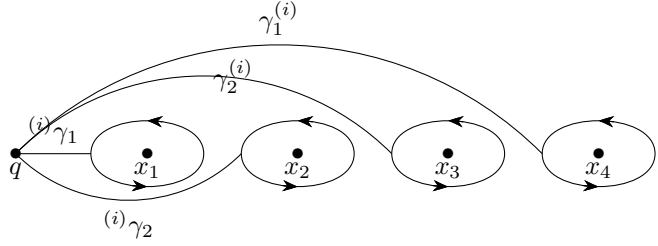
\subsection{Loops around singular points}\label{3}
Consider an arrangement $\A$ defined over the reals as in the precedent section. We have a canonical way of associating an elementary geometric base for every line $\Sigma_t$ passing though $q$ with $t\in[\pi/2,3\pi/2[$ as in $\ref{randellspencil}$.
We will write the elementary geometric base over the directions of the points \ref{en1} and \ref{en2} in terms of the elements of $\Gamma^{(i)}$.

 This can be seen as finding elementary loops for the points in $\Sing\A$, which can be divided into finite distance points $\Sing \A^{\text{aff}}\cong \Sing \A(\mathbb{R})$ and infinite distance $\Sing \A \cap L_\infty$.

%\begin{defn} An elementary loop at infinity is the inverse of the reversed product of a elementary geometric base.
%\end{defn}

\begin{lem}\label{lineatinfinity} The inverse of a  meridian loop around the line at infinity  $L_\infty$ at the point $L_\infty\cap \Sigma^{(i)}$ is given by the product of the elements of the elementary base $\Gamma^{(i)}$, this is :
$$(\gamma_{\infty}^{(i)})^{-1}=\gamma_n^{(i)}\cdot\gamma_{n-1}^{(i)}\cdots \gamma_1^{(i)}	 $$
\end{lem}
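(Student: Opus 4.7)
The plan is to interpret the claim entirely inside the complex line $\Sigma^{(i)}$, viewed as a projective line $\mathbb{P}^1 \subset \mathbb{P}^2$, and then push the resulting identity forward along the inclusion of pencil-fibers into $M(\A)$.

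First I would unwind the definition of an elementary geometric base. By construction, $P^{(i)} := \Sigma^{(i)}(\mathbb{R})\cap \A(\mathbb{R}) = \{y_1,\ldots,y_n\}$ is the ordered set of real intersection points on $\Sigma^{(i)}$ and $\Gamma^{(i)}=(\gamma_1^{(i)},\ldots,\gamma_n^{(i)})$ is a geometric base of $\pi_1(\Sigma^{(i)}\setminus P^{(i)},q)$. The defining relation gives
\[
\gamma_n^{(i)}\cdot\gamma_{n-1}^{(i)}\cdots \gamma_1^{(i)} = \partial B(0,M)_q
\]
in $\pi_1(\Sigma^{(i)}\setminus P^{(i)},q)$, where $\partial B(0,M)$ is a counterclockwise circle in $\Sigma^{(i)}\cong \mathbb{C}$ whose bounded disk contains all of $P^{(i)}$.

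Next I would complete the affine fiber to the projective line by adjoining $p_\infty := L_\infty\cap \Sigma^{(i)}$. Since $\Sigma^{(i)}$ is distinct from $L_\infty$, they meet transversally at $p_\infty$, so a small holomorphic disk in $\Sigma^{(i)}$ centered at $p_\infty$ is transverse to $L_\infty$ in $\mathbb{P}^2$; its boundary is therefore simultaneously a meridian of $p_\infty$ in $\Sigma^{(i)}\setminus (P^{(i)}\cup\{p_\infty\})$ and a meridian of $L_\infty$ at $p_\infty$ in $M(\A)$. Under the inclusion
\[
\Sigma^{(i)}\setminus (P^{(i)}\cup\{p_\infty\}) \hookrightarrow M(\A),
\]
the two notions of meridian match, so it suffices to prove the identity in $\pi_1(\Sigma^{(i)}\setminus (P^{(i)}\cup\{p_\infty\}),q)$.

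The remaining content is a purely topological statement on $\mathbb{P}^1\setminus (P^{(i)}\cup\{p_\infty\})$: on the Riemann sphere, the annulus $\{|z|\geq M\}\cup\{p_\infty\}$ contains no punctures other than $p_\infty$, so its two boundary components are freely homotopic, and the counterclockwise circle $\partial B(0,M)$ (as viewed from $0$) is the clockwise circle around $p_\infty$ (as viewed from $p_\infty$). Hence it is freely homotopic, and based-homotopic via the same tail used to define elementary loops, to $(\gamma_\infty^{(i)})^{-1}$, where $\gamma_\infty^{(i)}$ is an (elementary) meridian around $p_\infty$. Combining this with the first displayed equation and the previous paragraph yields
\[
(\gamma_\infty^{(i)})^{-1} = \gamma_n^{(i)}\cdot\gamma_{n-1}^{(i)}\cdots\gamma_1^{(i)}
\]
in $\pi_1(M(\A),q)$, as required.

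The only delicate step is the orientation reversal: one must check that the counterclockwise orientation of $\partial B(0,M)$ on the affine chart of $\Sigma^{(i)}$ corresponds, on the chart at infinity $w=1/z$, to the clockwise orientation around $w=0$. This is where the inverse in the statement comes from, and I would handle it with an explicit local computation in the coordinate $w=1/z$ on the second chart of $\Sigma^{(i)}\cong \mathbb{P}^1$.
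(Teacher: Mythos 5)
Your proof is correct and follows essentially the same route as the paper, which simply invokes the defining relation $\gamma_n^{(i)}\cdots\gamma_1^{(i)}=\partial B(0,M)_q$ of a geometric base together with the observation (made in a remark right after that definition) that $\partial B(0,M)_q$ is the inverse of a meridian around the point at infinity. The extra details you supply --- transversality of $\Sigma^{(i)}$ and $L_\infty$ so that the fiberwise meridian is a meridian of $L_\infty$ in $M(\A)$, and the orientation check in the chart $w=1/z$ --- are exactly the content the paper leaves implicit.
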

\begin{proof}
This is a simple consequence on the definition of geometric base and the choice of the base point. 
\end{proof}
 This meridian can be seen as an \emph{elementary} loop, as it is product of loops of this type.
 
 Recall that $t_i=\theta(p_i)$ denotes the angle of the line $\Sigma_{t_i}$ containing $p_i$ and $q$.
\begin{defn} A meridian $\gamma_{p_i}'$ \emph{around a singular point} $p_i\in\Sing \A(\mathbb{R})$, is a meridian of $p_i\in \Sigma_{t_i}$ (based at $q$).
\end{defn}

%Idea: Podriamos ver estos loops como un esquema? De cierta forma estos se están volviendo uno solo y después se vuelven a despegar es como cuando tienes un producto fibrado sobre $C$ y sobre $R$ y luego se pegan y se despegan.
We can consider the \emph{elementary} meridian $\gamma_{p_i}$ as the elementary loop of $p_i$ in $\Sigma_{t_i}$ (based at $q$). With the notation of Proposition \ref{1eq} we have:

\begin{lem}\label{finite} The elementary meridian $\gamma_{p_i}$ can be obtained as a product of the elements of $\Gamma^{(i-1)}$ which surround the lines passing through $p_i$. Namely %No esa totalmente claro, necesito decir que de la j a la k lo atraviesan.
$$\gamma_{p_i}=\gamma_{k}^{(i-1)}\gamma_{k-1}^{(i-1)}\cdots\gamma_{j+1}^{(i-1)} \gamma_{j}^{(i-1)}.$$
\end{lem}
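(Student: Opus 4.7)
My plan is to prove the lemma in two steps: first, identify the product $\gamma_k^{(i-1)}\cdots\gamma_j^{(i-1)}$ with a canonical elementary loop on the fiber $\Sigma^{(i-1)}$ whose bounded component encloses exactly the points $x_j,\ldots,x_k$; then, deform this loop across the pencil from $\Sigma^{(i-1)}$ to $\Sigma_{t_i}$, observing that the enclosed points collapse to the singular point $p_i$, so the limit loop is $\gamma_{p_i}$.

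For the first step, I would establish an auxiliary statement in the spirit of Lemma \ref{lineatinfinity}: for any block of consecutive indices $j\leq l\leq k$, the product $\gamma_k^{(i-1)}\cdots\gamma_j^{(i-1)}$ equals, as an element of $\pi_1(\Sigma^{(i-1)}\setminus\A^{\text{aff}},q)$, an elementary loop $C_q$ on $\Sigma^{(i-1)}$ whose bounded component contains $x_j,\ldots,x_k$ and no other intersection point. This is proved by choosing $C$ to be a simple closed curve enclosing the real interval $[x_j,x_k]$ with $C\cap\{\Im(z)<0\}$ connected, connecting it to $q$ by a path $\theta$ passing below $x_1,\ldots,x_{j-1}$, and then checking that $\theta\cdot C\cdot \theta^{-1}$ can be deformed into the nested concatenation $\gamma_k^{(i-1)}\cdots\gamma_j^{(i-1)}$ by collapsing the arc of $C$ below the real axis onto a sequence of small elementary arcs around each $x_l$, exactly as in the proof of Lemma \ref{lineatinfinity} restricted to this sub-block. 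Uniqueness of the elementary representative (the remark following the definition of elementary loop) pins down the class.

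For the second step, consider the real lines $\Sigma_t(\mathbb{R})$ through $q$ for $t\in[t_i-\varepsilon,t_i]$; by the choice of $\varepsilon$ in \ref{3.1.3} no direction of type \ref{en1} or \ref{en2} other than $t_i$ appears in this interval. Hence the $k-j+1$ intersection points of $\Sigma_t(\mathbb{R})$ with the lines of $\A(\mathbb{R})$ through $p_i$ depend continuously on $t$ and coalesce to $p_i$ at $t=t_i$, while the remaining intersections stay uniformly bounded away from $p_i$. Choose a small ball $B_\eta(p_i)\subset\mathbb{C}^2$ meeting $\A^{\text{aff}}$ only in the lines through $p_i$, and for each $t$ set $C_t=\partial(B_\eta(p_i)\cap \Sigma_t)$, together with a connecting path $\theta_t$ varying continuously in $t$; this yields a continuous family $\alpha_t=\theta_t\cdot C_t\cdot\theta_t^{-1}\subset M(\A)$ of elementary loops based at $q$. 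By construction $\alpha_{t_i-\varepsilon}$ is the elementary loop of the first step, and $\alpha_{t_i}$ is $\gamma_{p_i}$ by definition, proving the lemma.

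The main obstacle is the bookkeeping of the first step, since the ordered concatenation $\gamma_k^{(i-1)}\cdots\gamma_j^{(i-1)}$ (in reverse index) must yield the specific elementary representative based at $q$; this is ensured by the requirement $\Im(\theta(t))<0$ in the definition of elementary loop, which forces the little meridians $\gamma_l^{(i-1)}$ to leave $q$ below the real axis and therefore concatenate into a single path that passes below $x_{j-1}$, loops once around $[x_j,x_k]$, and returns below $x_{j-1}$ to $q$. The second step is then a routine continuity argument once $\eta$ and $\varepsilon$ are small enough.
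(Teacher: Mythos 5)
Your proposal is correct and follows essentially the same route as the paper: the paper's (one-sentence) proof is exactly your first step, namely that a product of consecutive elements of an elementary geometric base is the elementary loop enclosing precisely $\{x_j,\ldots,x_k\}$. Your second step, the continuity argument degenerating this loop from $\Sigma^{(i-1)}$ to the elementary loop of $p_i$ in $\Sigma_{t_i}$, is left implicit in the paper, so you are simply supplying a detail the author omitted.
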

\begin{proof}
An elementary geometric base $\Gamma$ is constructed in such a way that the product of $k-j+1$ consecutive elements $(\gamma_{j},\ldots,\gamma_{k})$ of $\Gamma$ equals an elementary loop $C_q$, where $C$ is an oriented counter-clockwise simple closed curve that in the bounded part that it determines contains exactly $\{x_{j},\ldots,x_{k}\}$.
\end{proof}
\noindent Next we determine the meridians around multiple points lying in the line at infinity. 

Let $p_i\in \Sing\A\cap L_\infty$. Consider the line $\Sigma_{t_i}$ passing through $q$ and $p_i$. Suppose there are exactly $k$ lines in $\A$ different from $L_\infty$ passing through $p_i$ then their real points are parallel lines to $\Sigma_{t_i}(\mathbb{R})$ in $\A(\mathbb{R})$. As is Section \ref{subsection2.2} we have: $$\Sigma_{t_i}(\mathbb{R})\cap \A(\mathbb{R})=\{x_1,\ldots,x_{n-k}\}, $$
with $k\geq 1$ depending on $i$. The order of the points $x_i$ given by the orientation of $\Sigma_{t_i}(\mathbb{R})$. Hence we can take the elementary geometric base  $\Gamma_{t_i}\subset \Sigma_{t_i}$ associated with $P=\{x_1,\ldots,x_{n-k}\}$ and the base point $q$. Suppose $\Gamma_{t_i}=(\gamma_1,\ldots,\gamma_{n-k})$.
\begin{defn} A meridian $\gamma_{p_i}$ around a singular point at infinity $p_i\in \Sing \A\cap L_\infty$ is a meridian at infinity of $\Sigma_{t_i}$.
\end{defn}

\begin{lem}\label{meridian3} Let $\Gamma^{(i-1)}=(\gamma_1^{(i-1)},\ldots, \gamma_n^{(i-1)})$ be as in Section \ref{3.1.3}. For every point $p_i\in \Sing\A\cap L_\infty$  the elementary meridian $\gamma_{p_i}$ is given by any of the equivalent expressions
\begin{equation}\label{eq:6}
\gamma_{p_i}=\gamma_\infty^{(i-1)}\cdot\gamma_n^{(i-1)}\cdots\gamma_{n-k+1}^{(i-1)},
\end{equation}
or
\begin{equation}\label{eq:7}
\gamma_{p_i}^{-1}=\gamma_{n-k}^{(i-1)}\cdots\gamma_1^{(i-1)} .
\end{equation}

\end{lem}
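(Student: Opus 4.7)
The plan is to deduce both formulas from Lemma \ref{lineatinfinity} applied to the rotating line, together with a continuity argument.

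First I would observe that, by definition, $\gamma_{p_i}$ is the meridian at infinity of $\Sigma_{t_i}$. Lemma \ref{lineatinfinity} applied to the line $\Sigma_{t_i}$ itself (with its unique elementary geometric base $\Gamma_{t_i} = (\gamma_1, \ldots, \gamma_{n-k})$ attached to the points $\{x_1, \ldots, x_{n-k}\} = \Sigma_{t_i}(\mathbb{R}) \cap \A(\mathbb{R})$) gives
$$\gamma_{p_i}^{-1} = \gamma_{n-k} \cdot \gamma_{n-k-1} \cdots \gamma_1.$$
Thus it suffices to identify $\gamma_j = \gamma_j^{(i-1)}$ in $\pi_1(M(\A), q)$ for every $j \in \{1, \ldots, n-k\}$.

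For this identification I would argue by continuous deformation along the pencil $\Sigma_t$ for $t \in [t_i - \varepsilon, t_i]$. By choice of $\varepsilon$, no $t$ in this interval except $t = t_i$ is of type \ref{en1} or \ref{en2}, so as $t$ increases the elementary base of $\Sigma_t$ varies continuously; in particular its first $n-k$ members are carried to meridians around the points $y_1(t), \ldots, y_{n-k}(t)$ that, by the argument of Proposition \ref{Prop:3.5}, remain on the same $n-k$ lines of $\A(\mathbb{R})$ that $\Sigma_{t_i}$ meets transversely in the affine part. Sending $t \to t_i^-$, these points converge to $x_1, \ldots, x_{n-k}$, and the meridians converge to $\gamma_1, \ldots, \gamma_{n-k}$; the required equality $\gamma_j = \gamma_j^{(i-1)}$ in $\pi_1(M(\A), q)$ follows. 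This establishes equation \ref{eq:7}.

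To pass from \ref{eq:7} to \ref{eq:6} the work is purely algebraic. Applying Lemma \ref{lineatinfinity} to $\Sigma^{(i-1)}$ itself gives
$$\gamma_\infty^{(i-1)} = \bigl(\gamma_n^{(i-1)} \cdot \gamma_{n-1}^{(i-1)} \cdots \gamma_1^{(i-1)}\bigr)^{-1} = (\gamma_1^{(i-1)})^{-1} \cdots (\gamma_n^{(i-1)})^{-1}.$$
Multiplying on the right by $\gamma_n^{(i-1)} \cdots \gamma_{n-k+1}^{(i-1)}$ and cancelling $k$ consecutive inverse pairs yields exactly $(\gamma_1^{(i-1)})^{-1} \cdots (\gamma_{n-k}^{(i-1)})^{-1}$, which by \ref{eq:7} is $\gamma_{p_i}$; this gives \ref{eq:6}.

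The main obstacle, and the step that warrants the most care, is the deformation argument: one must be sure that the continuous family of loops stays inside $M(\A)$ and that no meridian among the first $n-k$ is altered by a Lemma \ref{looppassing}-type conjugation during the rotation. This is guaranteed precisely because the transitions of type \ref{en1} and \ref{en2} are avoided on $(t_i-\varepsilon, t_i)$, so the $k$ points $y_{n-k+1}(t), \ldots, y_n(t)$ simply escape to infinity without crossing any of the first $n-k$ in the ordering on $\Sigma_t(\mathbb{R})$, leaving the first $n-k$ meridians untouched in $\pi_1(M(\A), q)$.
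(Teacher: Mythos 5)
Your proof is correct and follows essentially the same route as the paper: the paper also obtains (\ref{eq:7}) by identifying $\Gamma_{t_i}=(\gamma_1^{(i-1)},\ldots,\gamma_{n-k}^{(i-1)})$ via continuity, Proposition \ref{Prop:3.5} and uniqueness of the elementary geometric base, then applies Lemma \ref{lineatinfinity} to $\Sigma_{t_i}$, and finally derives (\ref{eq:6}) from (\ref{eq:7}) by the same cancellation of the $k$ inverse pairs in $\gamma_\infty^{(i-1)}\cdot\gamma_n^{(i-1)}\cdots\gamma_{n-k+1}^{(i-1)}$. Your write-up merely spells out the continuity/deformation step in more detail than the paper does.
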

\begin{rem}
In (\ref{eq:6}) a similitude with the formula of Lemma \ref{finite} can be observed. Namely the product of the meridians of the lines crossing the point $p_i$ give the meridian.
In (\ref{eq:7}) we simply compute the meridian around the point at infinity in the line $\Sigma_{t_i}$, so it is closer to Lemma \ref{lineatinfinity}.
\end{rem}
\begin{proof}

%If there is no line intersecting $\Sigma^i$ it means that $\A$ consists of a pencil, say of $n$ lines. It is well known that the product in $\mathbb{P}^2$ of all meridian loops corresponding to each line, which can be seen as the meridian around the unique singular point, is trivial. Any loop $\gamma\subset \Sigma^i$ based in $q$ is homotopic to zero which simply reflects this fact.

As no other point of $\Sing \A$ lies in $\Sigma_{t_i}$ by continuity, Proposition \ref{Prop:3.5} and the uniqueness of the elementary geometric base we have that $$\Gamma_{p_i}=(\gamma_1^{(i-1)},\ldots, \gamma_{n-k}^{(i-1)}),$$  
by applying Lemma \ref{lineatinfinity} we obtain (\ref{eq:7}).

In $\Sigma^{(i-1)}$ we have 
\begin{equation*}
\gamma_\infty^{(i-1)}=( \gamma_1^{(i-1)})^{-1}\cdots (\gamma_n^{(i-1)})^{-1},
\end{equation*}
therefore for the right hand side of (\ref{eq:6})
\begin{equation*}
\gamma_\infty^{(i-1)}\cdot\gamma_n^{(i-1)}\cdots\gamma_{n-k+1}^{(i-1)}=(\gamma_1^{(i-1)})^{-1}\cdots (\gamma_{n-k}^{(i-1)})^{-1}
\end{equation*}
 which equals $\gamma_{p_i}$ by (\ref{eq:7}).
\end{proof}
\begin{rem}
By the results of this Section we have obtained singular meridians for every point $p\in \Sing \A$ with $\A$ defined over the reals. In \cite{garber} Garber generalize a formula of Fujita \cite{fujita} expressing locally the singular meridians as the product of the meridians of the irreducible components in the singular point. He then uses this result globally when the lines intersect transversally, this is, when there is no additional conjugation. Our method can be seen as a generalization of this by allowing multiple points of higher order.
\end{rem}
%and as in the previous case:
%$$\tilde{\gamma}_{p^i}=\gamma_{p^i}. $$

\section{LAC Surfaces} \label{sec:Lac}

\subsection{Construction}\label{subsection4.2}
 We will construct surfaces generalizing the complement of a hyperplane arrangement and obtain a presentation of them.

Fix $\A=\{L_1,\ldots,L_k\}$ an arrangement of lines in $\mathbb{P}^2$. Let $\bar{X}$ be the blow up of $\mathbb{P}^2$ at  $\Sing \A=\{p_1,\ldots, p_s\}$ and  $\pi:\bar{X}{\to} \mathbb{P}^2$ the projection map. Denote by $D_1,\ldots, D_k$ the strict transform of the lines $L_1,\ldots, L_k$ and by  $D_{k+1},\ldots, D_{k+s}$ the exceptional divisors associated to the points $p_1,\ldots, p_s$.

Given a subset $I\subset \{1,\ldots, k+s\}$ we can define the orbifold $\mathcal{X}(\bar{X},D,r_I)$ associated to the divisor $D=\sum D_i$ and the weights $r_I=(r_1,\ldots, r_{k+s})$ where $r_i=1$ if $i \in I$ and $r_i=+\infty$ if $i\not\in I$. Then $\mathcal{X}(\bar{X},D,r_I)=\bar{X}\setminus (D_I)_\infty$ where we have written $D_I$ for $D$ to emphasize the dependence on $I$.
\begin{defn}
We call $\bar{X}\setminus (D_I)_\infty$ a  \emph{ (partial) Linear Arrangement Compactification} or LAC surface. 
\end{defn}
\begin{rem}\label{rema3}
If $I=\varnothing$, $(D_I)_\infty = D$ and $\pi$ restricted to $\bar{X}\setminus D$ is a biholomorphism with $M(\A)$, from which it follows that 
\begin{equation}\label{bihfunda}
\pi_1(\bar{X}\setminus D)\cong \pi_1(M(\A)),
\end{equation}
showing that these surfaces are indeed generalizations of the complement of an arrangement.
\end{rem}
%We will see in the next section that we can reduce to study the case when we give weight $1$ only to exceptional divisors.
\subsection{Reduced LAC Surfaces}
In \cite{Eyssidieux} a comment before Proposition 1.3 mentions that the log pair $(\bar{X},D)$ has to be rigid if one wants the fundamental group to be very different from $\bar{X}\setminus D$. We prove here that we can reduce the study of LAC surfaces to partially compactify only with respect to exceptional divisors, this is, the subset of irreducible components of $D$ with weight $1$ are exceptional divisors.

  We do so by showing that if a strict transform of a line $L_i$ has weight $1$, then we can find an arrangement of less lines whose associated LAC surface has the same fundamental group.
   In this process the double points lying in the line that we have removed create isolated points and we must allow to blow them up as well in order to cover the case when this exceptional divisor had weight $1$ in $\A$. With this is mind we have the following definition.

\begin{defn}
A \emph{LAC datum}, is a triple
$$(\A,S,I):= (\A=\{L_1,\ldots, L_k\}\subset \mathbb{P}^2, S=\{p_1,\ldots,p_s\}\subset \mathbb{P}^2, I\subset \{1,\ldots,k+s\}) $$
where $\A$ is an arrangement of lines in $\mathbb{P}^2$, $S$ a finite set of points and $I$ an index set.
\end{defn}

Given a LAC datum $(\A,S,I)$ we can construct a surface $\bar{X}\setminus (D_I)_\infty$ as in \ref{subsection4.2}. Consider $\bar{X}$ the blow up of $\mathbb{P}^2$ in the points $S$, call $D_1,\ldots, D_k$ the strict transform of the lines in $\A$, $D_{k+1},\ldots, D_{k+s}$ the exceptional divisors and  $(D_I)_\infty=\sum_{j\in J}D_j$ where $J:=\{1,\ldots, k+s\} \setminus I$. As we can change the arrangement and the set of points to blow up, we prefer the notation $M(\A,S,I)$ for this surface.

\begin{defn}
Two LAC datum $(\A,S,I)$, $(\A', S', I')$ are said to be equivalent if and only if 
$$\pi_1(M(\A,S,I))\cong \pi_1(M(\A',S',I')).$$ In such a case we write $(\A,S,I)\sim(\A', S', I')$.
\end{defn}
%We can construct a similar presentation for the group as in (?) (can we? I mean I guess obviously we must take care only of the presentation).
%\emph{General question} Which groups arise as fundamental groups of these surfaces?
%(Be careful when there is only a line and you remove and fill the hole again it depend on your definition but it must rest a hole...)

\begin{defn} A LAC datum $(\A,S, I)$ such that $S\subset \Sing \A$ and $I=S$ is called \emph{reduced}. In this case we write $(\A,I)$.  %We denote the associated surface by $M(\A,I)$ or $M_I$ if $\A$ is clear from the context.
\end{defn}

\begin{thm}\label{reducedLac} For every LAC $(\A, S, I)$ there is a canonical equivalent reduced LAC $(\A',I')$.
\end{thm}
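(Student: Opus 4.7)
The plan is to reduce any LAC datum $(\A,S,I)$ to a reduced one by a finite sequence of canonical local moves, each preserving $\pi_1$. The organizing principle is the identity
\[
\pi_1(M(\A,S,I)) \;=\; \pi_1(M(\A))\,/\,\langle\langle \gamma_j : j\in I\rangle\rangle,
\]
a consequence of Remark \ref{rema3} together with the orbifold/Van Kampen presentation recalled in Section 2.2. Here, for $j\le k$, $\gamma_j$ is the meridian of $L_j$ in $M(\A)$, and for $j>k$, $\gamma_j$ is the exceptional meridian around $E_{p_{j-k}}$ described explicitly by Theorem \ref{MainT}. Every equivalence $(\A,S,I)\sim(\A',S',I')$ I produce is checked by matching the two presentations of this form.

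Step (i): eliminate $p\in S\setminus \Sing\A$. If $p\notin\A$, the exceptional divisor $E_p$ is disjoint from every other component of $D$; the blow-down $\bar X\to \bar X_{S\setminus\{p\}}$ contracts $E_p$ to the smooth point $p$, and according as $E_p$ is kept or removed, the old LAC is obtained from the new by blowing up or by deleting the smooth point $p$, both of which preserve $\pi_1$ of a smooth complex surface. If $p$ is a smooth point of $\A$ lying on a unique line $L_j$, the key observation is the identification $\gamma_{E_p}=\gamma_{L_j}$ in $\pi_1(M(\A))$, since a small loop around $E_p$ descends under blow-down to a small loop around $L_j$ at $p$. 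Thus imposing $\gamma_{E_p}=1$ is the same as imposing $\gamma_{L_j}=1$, and one takes the new $I'$ to contain the index of $L_j$ whenever either $E_p$ or $D_j$ lay in $I$. After this step $S\subset \Sing\A$.

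Step (ii), the main step: eliminate weight-$1$ strict transforms. Fix $i_0\in I$ with $i_0\le k$ and put $\A'=\A\setminus\{L_{i_0}\}$. I claim $(\A,S,I)\sim(\A',S,I\setminus\{i_0\})$: combining the presentation above with $\pi_1(M(\A'))=\pi_1(M(\A))/\langle\langle \gamma_{L_{i_0}}\rangle\rangle$, the right-hand side becomes $\pi_1(M(\A))/\langle\langle \gamma_{L_{i_0}},\gamma_j : j\in I\setminus\{i_0\}\rangle\rangle$, which agrees with the left-hand side. What makes this work is that each exceptional meridian $\gamma_j$ ($j>k$, $j\in I\setminus\{i_0\}$) computed for $\A$ by Theorem \ref{MainT} maps under $\pi_1(M(\A))\to\pi_1(M(\A'))$ to the corresponding exceptional meridian for $\A'$, so the two normal closures coincide. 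After this move, $S$ may contain points no longer in $\Sing\A'$ (typically former double points $L_{i_0}\cap L_j$), which are handled by returning to step (i); step (i) may in turn re-introduce weight-$1$ strict transforms (now of $\A'$), so steps (i) and (ii) are iterated. The process terminates because each application of step (ii) strictly decreases $|\A|$.

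Step (iii): if $p\in S$ is such that the index of $E_p$ is not in $I$, then $(\A,S,I)\sim(\A,S\setminus\{p\},I)$, because by step (ii) every strict transform through $p\in\Sing\A$ has weight $+\infty$, so $p$ is absent from both LAC surfaces and the blow-down at $E_p$ is a biholomorphism between them. After this step every $p\in S$ has its $E_p$ in $I$, which together with $S\subset\Sing\A$ from step (i) gives the reduced form. Canonicity of the reduction follows since each of the three moves is intrinsically determined by the datum, and the final output does not depend on the order of application. The main obstacle is step (ii): matching the exceptional meridians of $\A$ with those of $\A'$ under the quotient map needs the explicit formulas of Section \ref{3} and a careful tracking of how the multiplicities of singular points on $L_{i_0}$ change (or disappear) when $L_{i_0}$ is deleted.
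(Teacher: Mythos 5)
Your argument is correct and follows the same overall strategy as the paper: a finite collection of local reduction moves (delete weight-one lines, discard points of $S$ lying off $\Sing\A$ or at smooth points of $\A$, discard exceptional divisors that are removed anyway), each preserving $\pi_1$, iterated until the datum is reduced, with termination because $|\A|$ and $|S|$ only decrease. The one substantive divergence is your step (ii), which you single out as ``the main obstacle.'' You prove $(\A,S,I)\sim(\A\setminus\{L_{i_0}\},S,I\setminus\{i_0\})$ by matching presentations and tracking how the exceptional meridians of $\A$ map under $\pi_1(M(\A))\to\pi_1(M(\A\setminus\{L_{i_0}\}))$. This is valid --- a meridian of $E_p$ remains a meridian of $E_p$, up to conjugacy, under an inclusion that adds back another divisor component, so the two normal closures coincide and no explicit formulas from Section \ref{3} are actually needed --- but it is far more machinery than the step requires: Lemma \ref{RL1} in the paper observes that the two LAC surfaces are \emph{literally the same open subset} of $\Bl_S\mathbb{P}^2$, since giving $L_{i_0}$ weight one means its strict transform is not removed, which is indistinguishable from never having listed $L_{i_0}$ in the arrangement. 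So the step you flag as the crux is in fact a tautology, and no fundamental-group argument is needed there. Your step (i) treatment of a smooth point $p\in L_j\subset\A$ with $E_p$ kept also differs only cosmetically from Lemma \ref{RL3}(2): you transfer the weight to $L_j$ and let a later step (ii) delete the line, whereas the paper deletes $L_j$ directly, both via the identification of $\gamma_{E_p}$ with a meridian of $L_j$ up to conjugacy. Finally, note that both you and the paper leave the order-independence behind the word ``canonical'' essentially unargued; your closing assertion that ``the final output does not depend on the order of application'' would need its own (easy but nonempty) check, while the paper sidesteps this by fixing the order of the moves.
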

\noindent We will need to prove first three reduction Lemmas.
\begin{lem}\label{RL1} Let $(\A, S,I)$ be a LAC datum. Suppose there exists $L_i\in \A$ such that $i\in I$, then 
$$(\A, S,I) \sim (\A\setminus {L}, S,I\setminus \{i\}) .$$
\end{lem}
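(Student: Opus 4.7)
The approach is to show that the two LAC surfaces are actually \emph{equal} as open subvarieties of a common ambient blow-up, so that the equivalence of LAC data is automatic.

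First, I unpack the definition. Writing $D_1,\ldots,D_k$ for the strict transforms of $L_1,\ldots,L_k$ in $\bar{X}$ and $D_{k+1},\ldots,D_{k+s}$ for the exceptional divisors over the points of $S$, we have $M(\A,S,I)=\bar{X}\setminus (D_I)_\infty$ with $(D_I)_\infty=\sum_{j\notin I}D_j$. Since $i\in I$, the component $D_i$ does not appear in $(D_I)_\infty$; it is a curve contained entirely in $M(\A,S,I)$.

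Next, I pass to the new datum $(\A\setminus L_i,S,I\setminus\{i\})$. Because $S$ is unchanged, the ambient blow-up $\bar{X}\to\mathbb{P}^2$ is the same complex manifold. After the natural renumbering of the lines in $\A\setminus L_i$ and the corresponding shift in $I\setminus\{i\}$, the new labelled divisors correspond precisely to $\{D_j : j\neq i\}$, and the subset to be removed, i.e.\ those components whose new index lies outside $I\setminus\{i\}$, corresponds to $\{D_j : j\in\{1,\ldots,k+s\}\setminus I\}$. Since $i\in I$, this is the very same set of divisors as $(D_I)_\infty$. Hence $M(\A,S,I)=M(\A\setminus L_i,S,I\setminus\{i\})$ as open subvarieties of $\bar{X}$, and in particular their fundamental groups are equal (not merely isomorphic).

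The statement is therefore essentially tautological once the definition of $M(\A,S,I)$ is unfolded: a strict transform of weight $1$ belongs to the space, so whether one labels it as a component of the compactifying divisor or simply forgets the corresponding line from $\A$ is immaterial --- nothing is added or removed from $\bar{X}$ as a set. The only obstacle is careful bookkeeping of the index shift $\{1,\ldots,k+s\}\to\{1,\ldots,(k-1)+s\}$, which carries no mathematical content.
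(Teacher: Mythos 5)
Your proposal is correct and follows essentially the same route as the paper: both arguments observe that since $i\in I$ the component $D_i$ is not part of $(D_I)_\infty$, so deleting $L_i$ from the arrangement and $i$ from $I$ leaves the removed divisor, and hence the surface $M(\A,S,I)=\bar{X}\setminus(D_I)_\infty$, literally unchanged. The paper phrases the bookkeeping as the identity $\{1,\ldots,s+k\}\setminus I=\{1,\ldots,\hat{i},\ldots,s+k\}\setminus(I\setminus\{i\})$ rather than via a renumbering, but the content is identical.
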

\begin{proof} 
Denote by $\bar{X}=\Bl_S \mathbb{P}^2$. As $M(\A,S,I)=\bar{X}\setminus (D_I)_\infty$ and 
$$\{1,\ldots,s+k\}\setminus I = \{1,\ldots,\hat{i},\ldots,s+k\}\setminus (I\setminus \{i\}) $$ 
denoting by  $(D'_{I\setminus \{i\}})_\infty$ the divisor to be removed given by $(\A\setminus L,S, I\setminus \{i\})$ we have that 
$$(D_I)_\infty=(D'_{I\setminus \{i\}})_\infty $$
 that implies
$$M{(\A, S, I)}= M(\A\setminus L,S,I\setminus \{i\}). $$
\end{proof}

\noindent So we can suppose $I\subset \{1,\ldots, s\}$. The next step is to consider points lying outside $\A$. 
\begin{lem}\label{RL2}

Let $(\A,S,I)$ be a LAC datum  such that there is $p_j\in S$ that lies in no line of $\A$. 
\begin{enumerate}
\item If $j\in I$ then $$ (\A,S,I) \sim (\A,S\setminus \{p_j\}, I\setminus  \{j\}).$$

\item If $j\not\in I$ then 
$$ (\A,S,I) \sim (\A,S\setminus \{p_j\}, I).$$
\end{enumerate}
\end{lem}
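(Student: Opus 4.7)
The plan is to realise both equivalences through a single blow-down morphism. Set $\bar X=\Bl_S\mathbb{P}^2$ and $\bar X'=\Bl_{S\setminus\{p_j\}}\mathbb{P}^2$, and let $\pi\colon\bar X\to\bar X'$ be the map that contracts the exceptional divisor $E_j$ over $p_j$. Because $p_j$ lies on no line of $\A$ and is distinct from the other points of $S$, the morphism $\pi$ is precisely the blow-up of the smooth point $p_j\in\bar X'$. The first step is the following bookkeeping observation: every irreducible component of $D$ on $\bar X$ other than $E_j$ maps isomorphically onto its counterpart on $\bar X'$, and $E_j$ is disjoint from all of them. This is the only place where the hypothesis that $p_j$ lies on no line is used.

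In case (1), with $j\in I$, the divisor $E_j$ is not part of $(D_I)_\infty$, so the correspondence above identifies $(D_I)_\infty$ on $\bar X$ with $(D'_{I\setminus\{j\}})_\infty$ on $\bar X'$. Restricting $\pi$ therefore yields a proper surjective holomorphic map
$$\pi\colon M(\A,S,I)\longrightarrow M(\A,S\setminus\{p_j\},I\setminus\{j\})$$
which is itself the blow-up at the single smooth interior point $p_j$. I would conclude by invoking the classical fact that blowing up a smooth point of a smooth complex surface induces an isomorphism on $\pi_1$ (the exceptional $\mathbb{P}^1$ is simply connected, and $\pi$ is an isomorphism off a complex-codimension $2$ locus).

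In case (2), with $j\notin I$, the divisor $E_j$ is now contained in $(D_I)_\infty$, so $M(\A,S,I)\subset \bar X\setminus E_j$, and the restriction of $\pi$ is a biholomorphism
$$\pi\colon M(\A,S,I)\xrightarrow{\ \sim\ } M(\A,S\setminus\{p_j\},I)\setminus\{p_j\}.$$
I would then finish by noting that $p_j$ is a single smooth interior point of the complex surface $M(\A,S\setminus\{p_j\},I)$ (it lies on no removed component, by the bookkeeping step), and that removing a point of real codimension $4\geq 3$ from a connected smooth manifold does not alter $\pi_1$, by a transversality argument on $2$-disks.

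I do not anticipate any genuine obstacle: the only item deserving care is the identification of $(D_I)_\infty$ on $\bar X$ with $(D'_{I\setminus\{j\}})_\infty$ (respectively $(D'_I)_\infty$) on $\bar X'$ under $\pi$, which is immediate once one observes that $p_j$ meets no line and no other exceptional divisor. The two topological inputs used at the end of each case are entirely standard, and it is reassuring that both cases are handled by the same geometric mechanism — contracting the superfluous exceptional divisor $E_j$.
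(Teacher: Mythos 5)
Your proposal is correct and follows essentially the same route as the paper: case (1) is handled by recognizing $M(\A,S,I)$ as the blow-up of $M(\A,S\setminus\{p_j\},I\setminus\{j\})$ at the smooth point $p_j$ and using invariance of $\pi_1$ under blow-ups, and case (2) by the biholomorphism onto $M(\A,S\setminus\{p_j\},I)\setminus\{p_j\}$ together with the fact that deleting a point from a complex surface does not change $\pi_1$. Your explicit bookkeeping of why $(D_I)_\infty$ matches $(D'_{I\setminus\{j\}})_\infty$ (resp.\ $(D'_I)_\infty$) under the contraction is a useful elaboration of a step the paper leaves implicit.
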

\begin{proof}
\begin{enumerate}
\item The surface  $M(\A,S,I)$ 
 is the blowing up of $M(\A,S\setminus \{p_j\}, I\setminus \{j\}) $ at the point $p_j$, as the fundamental group is invariant under blow ups we obtain the stated.
\item 
We have a biholomorphism given by restricting the blowing up map of $M(\A, S\setminus\{p_j\},I)$ at $p_j$, to the complement of the exceptional divisor
$$ M(\A,S,I) \overset{\sim}{\to} M(\A, S\setminus\{p_j\},I)\setminus \{p_j\} $$
but as 
$$ \pi_1(M(\A, S\setminus\{p_j\},I)\setminus \{p_j\})\cong \pi_1(M(\A, S\setminus\{p_j\},I))$$
the result follows.
\end{enumerate}
\end{proof}

\noindent The last reduction Lemma, can be divided into two parts. In the first case we show that it is only interesting when we blow up a point and do not remove the exceptional divisor. In the second part, a point of $p_j\in S$ that is a smooth point of $\A$ does not affect the fundamental group in either case $j\in I$ or $j\not\in I$. By the last Lemma we can assume that every point in $S$ lies in the arrangement $\A$.
\begin{lem}\label{RL3}
Let $p_j\in S\subset \A$. 
\begin{enumerate}
\item If $j\not\in I$ then 
$$ (\A,S,I) \sim (\A,S\setminus \{p_j\}, I).$$
\item Suppose $p_j\in L$ for some line $L\in \A$. If $j\in I$ and $p_j\not\in \Sing \A$, then
$$ (\A,S,I) \sim (\A\setminus \{L\},S\setminus\{p_j\}, I\setminus \{j\}).$$
\end{enumerate}
\end{lem}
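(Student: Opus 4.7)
The plan is to treat both parts using the blow-down $\pi_j : \bar{X} \to \bar{X}'$ contracting the exceptional divisor $E_{p_j}$, where $\bar{X}'$ is the blow up of $\mathbb{P}^2$ along $S\setminus\{p_j\}$.

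For Part (1), I would show that $\pi_j$ restricts to a biholomorphism from $M(\A, S, I)$ onto $M(\A, S\setminus\{p_j\}, I)$. Since $\pi_j$ is an isomorphism away from $E_{p_j}$, and $j\notin I$ forces $E_{p_j}\subset (D_I)_\infty$, the exceptional divisor is already removed from $M(\A, S, I)$. On the other side, since $p_j\in \A$ lies on some line of $\A$, the point $p_j$ belongs to the strict transform of that line in $\bar{X}'$, and hence to $(D'_I)_\infty$, so it is absent from $M(\A, S\setminus\{p_j\}, I)$ as well. A direct check then yields $\pi_j^{-1}((D'_I)_\infty) = (D_I)_\infty$, giving the biholomorphism and a fortiori the isomorphism on fundamental groups.

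For Part (2) the plan is more involved. The hypothesis $p_j\notin\Sing\A$ is crucial: the only line of $\A$ through $p_j$ is $L$ itself, so for every $L_i\neq L$ the strict transform $\tilde{L}_i$ misses $E_{p_j}$. This lets me verify $\pi_j^{-1}\bigl((D'_{I\setminus\{j\}})_\infty\bigr) = (D_I)_\infty\setminus \tilde{L}$, so $\pi_j$ restricts to a morphism $\bar{X}\setminus \bigl((D_I)_\infty\setminus \tilde{L}\bigr) \to M(\A\setminus\{L\}, S\setminus\{p_j\}, I\setminus\{j\})$ which is exactly the blow up at the smooth point $p_j$ of the target, and therefore induces an isomorphism on $\pi_1$. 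On the other hand, the inclusion $M(\A, S, I)\hookrightarrow \bar{X}\setminus\bigl((D_I)_\infty\setminus \tilde{L}\bigr)$ simply adds $\tilde{L}$ back; by the discussion of meridians in Section 2.1 this inclusion induces a surjection on $\pi_1$ whose kernel is the normal closure of a meridian around $\tilde{L}$.

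The main obstacle is then to show that this meridian of $\tilde{L}$ is already trivial in $\pi_1(M(\A, S, I))$. The argument I have in mind is local: take a tubular neighborhood $\widetilde{\Delta}$ of $E_{p_j}$ in $\bar{X}$, isomorphic to the blow up of a small ball in $\mathbb{C}^2$ at the origin. Because $p_j$ is a smooth point of $\A$, the intersection of $\widetilde{\Delta}$ with $(D_I)_\infty$ reduces to $\tilde{\ell}:=\tilde{L}\cap \widetilde{\Delta}$, so $\widetilde{\Delta}\cap M(\A, S, I) = \widetilde{\Delta}\setminus \tilde{\ell}$. The natural projection $\widetilde{\Delta}\to E_{p_j}\cong \mathbb{P}^1$ of the blow up exhibits $\tilde{\ell}$ as a single fiber, so $\widetilde{\Delta}\setminus \tilde{\ell}$ is a disk bundle over $\mathbb{P}^1\setminus \{\tilde{\ell}\cap E_{p_j}\}\cong \mathbb{C}$, which is contractible. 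A local meridian of $\tilde{L}$ near $p_j$ therefore bounds a disk in $\widetilde{\Delta}\setminus \tilde{\ell}\subset M(\A, S, I)$, and since $\tilde{L}\cong \mathbb{P}^1$ is connected every meridian of $\tilde{L}$ is conjugate to one such, hence trivial. Combining this with the two isomorphisms above yields $\pi_1(M(\A, S, I))\cong \pi_1(M(\A\setminus\{L\}, S\setminus\{p_j\}, I\setminus\{j\}))$, concluding the proof.
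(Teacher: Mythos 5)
Your argument is correct, but in part (2) it takes a genuinely different route from the paper's. For part (1) you essentially reproduce the paper's blow-down argument; the one divergence is that you claim $\pi_j^{-1}\bigl((D'_I)_\infty\bigr)=(D_I)_\infty$ on the nose, which uses that the line of $\A$ through $p_j$ has its strict transform contained in $(D'_I)_\infty$ --- true in the context where the lemma is applied (after Lemma \ref{RL1} no line index lies in $I$), but not for an arbitrary $I$; the paper sidesteps this by identifying $M(\A,S,I)$ with $M(\A,S\setminus\{p_j\},I)\setminus\{p_j\}$ and invoking invariance of $\pi_1$ under removal of a point from a smooth surface. For part (2), the paper passes through the intermediate datum $(\A,S,I\setminus\{j\})$, in which both $E_{p_j}$ and the strict transform of $L$ are deleted, and observes that the two target groups are quotients of $\pi_1(M(\A,S,I\setminus\{j\}))\cong\pi_1(M(\A,S\setminus\{p_j\},I\setminus\{j\}))$ by the normal closure of the \emph{same} element: a meridian of $L$ at the smooth point $p_j$ pulls back to a meridian of the exceptional divisor, so filling in $E_{p_j}$ upstairs and adding $L$ back downstairs kill the same normal subgroup. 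You instead pass through $\bar{X}\setminus\bigl((D_I)_\infty\setminus\tilde{L}\bigr)$, where both $E_{p_j}$ and the strict transform $\tilde{L}$ are present, reach the target group by blowing down, exhibit $\pi_1$ of that space as the quotient of $\pi_1(M(\A,S,I))$ by a meridian of $\tilde{L}$, and then must do the extra work of proving that this meridian is already trivial; your local computation (the complement of the strict transform of a line through the blown-up point in $\Bl_0\Delta$ is a disk bundle over $\mathbb{C}$, hence simply connected, combined with conjugacy of all meridians of the smooth irreducible $\tilde{L}$) does this correctly. The paper's version is shorter because it reuses the meridian formalism of Section 2 (in particular that $\pi^{-1}(\gamma_D)$ is a meridian of $E_p$), whereas yours makes the geometric mechanism --- why deleting $L$ from the arrangement costs nothing on $\pi_1$ once $E_{p_j}$ is kept --- completely explicit, at the price of the additional contractibility argument.
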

\begin{proof}
\begin{enumerate}
\item
Let $\bar{X}=\Bl_{S\setminus \{p_j\}}\mathbb{P}^2$ and $\bar{Y}=\Bl_{p_j} \bar{X}$. 
In $\bar{X}$ we have 
 $$(D_I)_\infty=\sum D_r \text{ with } r \in S\setminus (I\cup \{p_j\}) $$

In $\bar{Y}$
$$(D'_I)_\infty= (D_I)_\infty+ D'_j $$
Where we have denote also by $(D_I)_\infty$ the strict transform of the divisor with same notation in $\bar{X}$. Therefore we have a biholomorphism
$$\bar{Y}\setminus (D'_I)_\infty= M(\A, S, I) \overset{\sim}{\to} M(\A,S\setminus\{p_i\},I)\setminus\{p_j\}=\bar{X}\setminus ((D_I)_\infty \cup\{p_j\})$$
and the result follows.
\item  If $\gamma_{p_j}$ is a meridian at $p_j$ of $L$, as $p_j$ is a smooth point then it is also a meridian of the exceptional divisor $D_{k+j}$ in $M(\A,S,I\setminus \{j\})$. As $D_{k+j}$ is smooth, $\gamma_{p_j}$ generates the kernel of 
$$\pi_1(M(\A, S, I\setminus \{j\}))\to \pi_1(M(\A,S,I))$$
hence
\begin{equation}\label{eq:reduction}
\pi_1(M(\A, S, I\setminus \{j\}))/\left\langle\left\langle\gamma_{p_j} \right\rangle\right\rangle \cong \pi_1(M(\A,S,I)) 
\end{equation}
 By the point 1 above we have that 
$ (\A,S,I\setminus \{j\}) \sim (\A,S\setminus \{p_j\}, I\setminus \{j\}).$ Replacing in (\ref{eq:reduction}) we obtain 
\begin{equation}\label{eq:reduction2}
\pi_1(M(\A, S\setminus \{p_j\}, I\setminus \{j\}))/\left\langle\left\langle\gamma_{p_j} \right\rangle\right\rangle \cong \pi_1(M(\A,S,I)) 
\end{equation}
But $\gamma_{p_j}$ also generates the kernel of the map of fundamental group induced by the inclusion $$M(\A,S\setminus \{ p_j\},I\setminus\{ j\})\hookrightarrow M(\A\setminus L, S\setminus \{p_j\},I\setminus\{j\})$$
therefore
$$\pi_1(M(\A, S\setminus \{p_j\}, I\setminus \{j\}))/\left\langle\left\langle\gamma_{p_j} \right\rangle\right\rangle \cong \pi_1(M(\A\setminus \{L\},S\setminus \{p_j\}, I\setminus \{j\})) $$
which together with (\ref{eq:reduction2}) prove the statement.
\end{enumerate}
\end{proof}
 \begin{proof}[Proof of Theorem \ref{reducedLac} ]
 Given an arbitrary LAC datum $(\A, S, I)$ by Lemma \ref{RL1} we can suppose that $I\subset \{1,\ldots,s\}$. By Lemma \ref{RL2} all those points in $S$ not lying over $\A$ can be also discarded without changing the fundamental group. 
 
By Proposition \ref{RL3} 1, we remove from $S$ all points $p_j$ such that $j\not\in I$ so $S=I$, we will denote the LAC datum by $(A,S)$ .

 If there is a smooth point $p_j\in S$ such that $p_j\in L$ for some $L\in \A$ by Proposition \ref{RL3} 2, $(\A,S)\sim (\A\setminus \{L\}, S\setminus \{p_j\})$. This new LAC datum could have as well smooth points lying in $S\setminus \{p_j\}$, either coming from $S$ or from double points in $\A$ lying in $L$. We repeatedly apply Proposition \ref{RL3} 2, until $I\subset \Sing \A$ or $\A=\emptyset$. As there are only a number finite of points and lines this process must end and we obtain an equivalent reduced LAC datum $(\A',I')$ as wanted.

 \end{proof}
 
\subsection{A presentation for the orbifold fundamental group}
\begin{defn}
Let $\A=\{L_1,\ldots,L_k\}$, $\bar{X}$ the blow up of $\mathbb{P}^2$ at $\Sing\A=\{p_1,\ldots, p_s\}$  and $D_i$ as in section \ref{subsection4.2}. The divisor $D=\sum  D_i$ is SNC and for $r \in (\mathbb{N}^*\cup \{\infty\})^{k+s}$ the orbifold $\mathcal{X}(\bar{X},D,r)$ is called a  \emph{weighted LAC Surface}.
\end{defn}

\begin{thm}\label{MainT} Let $\A=\{L_1,\ldots,L_k\}$ be a real arrangement, $\mathcal{X}(\bar{X},D,r)$ a weighted LAC surface. Suppose we consider $L_k$ as a line at infinity and $\A^{\text{aff}}$ has no vertical line. Choose a base point $q$ and a canonical elementary geometric base $\Gamma^{(0)}=(\gamma_1,\ldots,\gamma_{k-1})$ based at $q$ and to the right of any vertex as in Section \ref{sec:FundG}. Let $\Sing \A=\{p_1,\ldots, p_s\}$ and $\gamma_{p_j}$ be the elementary singular meridian around $p_j$. Then the $\gamma_{p_j}$ can be expressed in terms of $\Gamma^{(0)}$ as in Lemmas \ref{finite} and \ref{meridian3} a presentation for $\pi_1(\mathcal{X}(\bar{X},D,r),q)$ is given by
\begin{equation}\label{eq:Presentation}
\pi_1(\mathcal{X}(\bar{X},D,r),q)=\left\langle \gamma_1,\ldots, \gamma_{k-1}\mid \bigcup_{p_l\in\Sing(\A(\mathbb{R}))}R_{p_l}, \gamma_i^{r_i},\gamma_{p_j}^{r_{k+j}}, \begin{array}{l}
i=1,\ldots, k ,\\
  j=1,\ldots, s 
\end{array}   \right\rangle \end{equation}
where we omit the relation $\gamma^{r_m}=1$ if $r_m=\infty$.
\end{thm}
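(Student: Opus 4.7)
The plan is to combine Randell's presentation (Theorem \ref{presentationRandell}) with the orbifold quotient description of Section 2.2. By Remark \ref{rema3}, the blow-up $\pi$ restricts to a biholomorphism $\bar{X}\setminus D \to M(\A)$, hence $\pi_1(\bar{X}\setminus D, q) \cong \pi_1(M(\A), q)$ admits Randell's presentation on the generators $\gamma_1,\ldots,\gamma_{k-1}$ with relations $\bigcup_{p_l\in \Sing \A(\mathbb{R})} R_{p_l}$. By Section 2.2, the orbifold fundamental group $\pi_1(\mathcal{X}(\bar{X},D,r),q)$ is obtained from this by further imposing $\gamma_{D_m}^{r_m}=1$ (in the normal closure sense) for every irreducible component $D_m$ of $D$ with $r_m\neq \infty$, where $\gamma_{D_m}$ denotes any meridian around $D_m$. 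The task thus reduces to expressing each such $\gamma_{D_m}$ as an explicit word in $\gamma_1,\ldots,\gamma_{k-1}$.

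For a strict transform $D_i$ with $i<k$, a meridian pulls back via $\pi^{-1}$ to a meridian of $L_i$ in $M(\A)$, so one may take $\gamma_{D_i}=\gamma_i$. For $D_k$, the strict transform of the line at infinity, Lemma \ref{lineatinfinity} shows that $\gamma_{D_k}$ equals $(\gamma_{k-1}\cdots \gamma_1)^{-1}$. For an exceptional divisor $D_{k+j}$ above $p_j\in \Sing \A$, I would identify its meridian with the elementary singular meridian $\gamma_{p_j}$ of Section \ref{3}: choosing a small ball $B$ around $p_j$ in $\mathbb{P}^2$, the biholomorphism $\pi^{-1}(B)\setminus D \simeq B\setminus \A$ transports a small loop linking $D_{k+j}$ at a smooth point of $D$ onto a loop around $p_j$ in $B\setminus \A$, which after suitable basing at $q$ coincides with the elementary singular meridian. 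Then Lemma \ref{finite} when $p_j\in \Sing \A^{\text{aff}}$, or Lemma \ref{meridian3} when $p_j\in L_\infty$, expresses $\gamma_{p_j}$ as a word in the elementary geometric base $\Gamma^{(i-1)}$ on $\Sigma^{(i-1)}$. Finally, iterated application of Proposition \ref{1eq} together with the transition formulas (\ref{eq:gbinf2}) of Section \ref{subsection2.2} rewrites every element of $\Gamma^{(i-1)}$ as a word in $\Gamma^{(0)}=(\gamma_1,\ldots,\gamma_{k-1})$, yielding the presentation (\ref{eq:Presentation}).

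The main technical point that needs care is the identification, for exceptional divisors, of $\gamma_{p_j}$ with a meridian of $D_{k+j}$. Section 2.1 only treats the case of a blow-up at a smooth point of the ambient divisor, whereas here the blow-up is performed at a singular point of $\A$. To handle this one argues that since $D_{k+j}$ is smooth in $\bar{X}$ and meridians of smooth components are unique up to conjugation, it suffices to exhibit a single holomorphic disk in $\bar{X}$ transverse to $D_{k+j}$ at a point lying in no other component of $D$: the (strict transform of the) complex line $\Sigma_{t_i}$ meets $D_{k+j}$ at exactly one such point, providing the disk. The elementary nature of the loop $\gamma_{p_j}$, combined with the convention $\Im(\theta(t))\leq 0$ on the basing path, then pins down its class uniquely in $\pi_1(\bar{X}\setminus D,q)$, so the identification goes through and the presentation (\ref{eq:Presentation}) follows.
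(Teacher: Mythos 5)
Your proposal is correct and follows essentially the same route as the paper: apply Randell's presentation to $\pi_1(\bar{X}\setminus D,q)\cong\pi_1(M(\A),q)$, identify the meridian of each exceptional divisor $D_{k+j}$ with the elementary singular meridian $\gamma_{p_j}$ via the strict transform of the line $\Sigma_{t_i}$, express everything in $\Gamma^{(0)}$ via Lemmas \ref{finite} and \ref{meridian3}, and quotient by the normal closure of the $r_m$-th powers. Your added justification for why the strict transform of $\Sigma_{t_i}$ furnishes a transverse disk meeting $D_{k+j}$ away from the other components is a slightly more careful version of the one-sentence assertion the paper makes at the same point, but it is not a different argument.
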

\begin{proof}
We find first a presentation for $\pi_1(\bar{X}\setminus D,q)$ and express the meridians around the $D_r$ in terms of $\gamma_i$.
As $\bar{X}\setminus D\cong M(\A)$ by remark \ref{rema3}, we obtain that $\gamma_i$ is a meridian of $D_i$ in $\bar{X}$ and by Theorem \ref{presentationRandell} we have the following presentation for $\pi_1(\bar{X}\setminus D,q)$:
$$\pi_1(\bar{X}\setminus D,q)=\left\langle \gamma_1,\gamma_2,\ldots, \gamma_{k-1}\mid \bigcup_ {p\in\Sing(\A(\mathbb{R}))}R_p \right\rangle. $$

 The elementary meridian $\gamma_k$ around $D_k$ is given by Lemma \ref{lineatinfinity} as $\gamma_k=(\gamma_{k-1}\cdots \gamma_1)^{-1}$.    
 The meridians around the exceptional divisor $D_{k+j}$ are given by the Lemmas \ref{finite} and \ref{meridian3} in the following way: $\gamma_{p_j}$ is a meridian around $p_j$ lying completely in the line $\Sigma_i$, so after the blow up this meridian lies in the strict transform of $\Sigma_i$ giving a meridian of $D_{k+j}$. Moreover, $\gamma_{p_j}$ is expressed in terms of $\Gamma^{(0)}$. By \cite{Eyssidieux} p.3 dividing by the normal subgroup generated by $\gamma_i^{r_i},\gamma_{p_j}^{r_{k+j}}$ we obtain the presentation.
\end{proof}

\begin{cor}\label{corollary1} Let $(\A,I)$ be a reduced LAC surface with $\A$ real. A presentation for $\pi_1(\bar{X}\setminus (D_I)_\infty)$ is given by
\begin{equation}\label{eq:corollary1}
\pi_1(\bar{X}\setminus (D_I)_\infty)\cong\left\langle \gamma_1,\ldots, \gamma_{k-1}\mid \bigcup_{p_r\in\Sing(\A(\mathbb{R}))}R_{p_r},\gamma_{p_j}, \quad  j\in I  \right\rangle. \end{equation}
\end{cor}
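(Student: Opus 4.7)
The plan is to derive Corollary \ref{corollary1} as the special case of Theorem \ref{MainT} corresponding to a reduced LAC datum, simply by reading off which weights are finite and which are infinite.

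First I would unpack the data. A reduced LAC $(\A,I)$ arises as a weighted LAC surface $\mathcal{X}(\bar{X},D,r)$ for the weight vector $r=(r_1,\ldots,r_{k+s})\in(\mathbb{N}^*\cup\{+\infty\})^{k+s}$ defined by $r_i=+\infty$ for $i=1,\ldots,k$ (so every strict transform $D_i$ is removed), $r_{k+j}=1$ for $j\in I$ (the exceptional divisors over the points we keep), and $r_{k+j}=+\infty$ for $j\in\{1,\ldots,s\}\setminus I$. With this choice, $\mathcal{X}(\bar{X},D,r)$ is the open subvariety $\bar{X}\setminus (D_I)_\infty$, so the two fundamental groups being compared are literally equal.

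Next I would invoke Theorem \ref{MainT}, which, after fixing the base point $q$ and the canonical elementary geometric base $\Gamma^{(0)}=(\gamma_1,\ldots,\gamma_{k-1})$, gives the presentation
\begin{equation*}
\pi_1(\mathcal{X}(\bar{X},D,r),q)=\left\langle \gamma_1,\ldots,\gamma_{k-1}\ \Big|\ \bigcup_{p_l\in\Sing(\A(\mathbb{R}))}R_{p_l},\ \gamma_i^{r_i},\ \gamma_{p_j}^{r_{k+j}}\right\rangle,
\end{equation*}
with the convention that a relation $\gamma^{r_m}$ is dropped when $r_m=+\infty$. Substituting the weights above: every relation $\gamma_i^{r_i}$ for $i=1,\ldots,k$ is dropped (all lines carry weight $+\infty$), and among the exceptional relations $\gamma_{p_j}^{r_{k+j}}$ only those with $j\in I$ survive, where they become $\gamma_{p_j}^{1}=\gamma_{p_j}=1$. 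This produces exactly the presentation (\ref{eq:corollary1}).

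There is really no hard step here: Theorem \ref{MainT} and the identification $\mathcal{X}(\bar{X},D,r)=\bar{X}\setminus D_\infty$ for mixed weights in $\{1,+\infty\}$ do all the work. The only thing to be careful about is bookkeeping of the indices, namely that in the reduced setting $I$ is to be read as a subset of $\{k+1,\ldots,k+s\}$ via the identification of exceptional divisors with points of $\Sing\A$, so that the notation ``$\gamma_{p_j},\ j\in I$'' in (\ref{eq:corollary1}) is consistent with the notation ``$\gamma_{p_j}^{r_{k+j}}$'' in Theorem \ref{MainT}.
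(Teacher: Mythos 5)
Your proposal is correct and coincides with the paper's (implicit) argument: the corollary is stated without proof precisely because it is the specialization of Theorem \ref{MainT} to the weight vector with $r_i=+\infty$ on all strict transforms and on the exceptional divisors outside $I$, and $r_{k+j}=1$ for $j\in I$, so that only the relations $\gamma_{p_j}=1$, $j\in I$, survive. Your added remark on the index bookkeeping and on identifying the weighted orbifold with the open variety $\bar{X}\setminus (D_I)_\infty$ is exactly the right point to check, and it holds by the paper's convention $\mathcal{X}(\bar{X},D,r)=[\bar{X}\setminus D_\infty]$ for weights in $\{1,+\infty\}$.
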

\section{Applications}\label{sec:App}
\subsection{LAC Surface with infinity fundamental group and finite abelianization}\label{subsectionExample}
Consider a set $S$ of $4$ points in general position in $\mathbb{P}^2$. The arrangement $\mathcal{B}=\{L_1,\ldots, L_6\}$ of $6$ lines connecting each pair of these is called \emph{the complete cuadrilateral}. It has $4$ triple points and $3$ double points: $\Sing \mathcal{B}=\{p_1,\ldots, p_7\}$ numbered as in Figure \ref{fig:Ceva2}. It has the following equation  $(z_1^2-z_2^2)(z_1^2-z_3^2)(z_2^2-z_3^2)=0 $ for projective coordinates $(z_1:z_2:z_3)$.

If we consider $L_6$ as the line at infinity, after a small rotation in order to have no vertical lines, we obtain the real picture as in Fig. \ref{fig:Ceva2}.

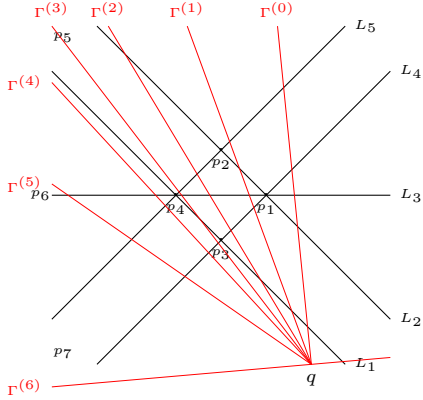
\begin{figure}[ht]
\centering
\begin{tikzpicture}[scale=1.5]
\draw (.8,-1.5) node [below] {\scriptsize $q$};

\node at (.4,0) [below]{\tiny $p_1 $};
\node at (0,.4)[below]{\tiny $p_2 $};
\node at (0,-.4)[below]{\tiny $p_3 $};
\node at (-.4,0)[below]{\tiny $p_4 $};
\node at (-1.4,1.4){\tiny $p_5 $};
\node at (-1.6,0){\tiny $p_6 $};
\node at (-1.4,-1.4){\tiny $p_7 $};

\draw (-1.5,0) -- (1.5,0) node[above,right]{\tiny $L_3$};
\draw (-1.5,-1.1) -- (1.1,1.5)node[above,right]{\tiny $L_5$};
\draw (-1.1,-1.5) -- (1.5,1.1)node[above,right]{\tiny $L_4$};
\draw (-1.5,1.1) -- (1.1,-1.5)node[above,right]{\tiny $L_1$};
\draw (-1.1,1.5) -- (1.5,-1.1)node[above,right]{\tiny $L_2$};

\draw [line width=0.05mm,red ] (.5,1.5)  node [above]{\tiny {$\Gamma^{(0)}$}} -- (.8,-1.5);
\draw [line width=0.05mm,red ] (-.3,1.5)node [above]{\tiny {$\Gamma^{(1)}$}} -- (.8,-1.5);
\draw [line width=0.05mm,red ] (.8,-1.5) -- (-1,1.5) node [above]{\tiny {$\Gamma^{(2)}$}};
\draw [line width=0.05mm,red ] (.8,-1.5) -- (-1.5,1.5) node [above]{\tiny {$\Gamma^{(3)}$}};
\draw [line width=0.05mm,red ] (.8,-1.5) -- (-1.5,1) node [left]{\tiny {$\Gamma^{(4)}$}};
\draw [line width=0.05mm,red ] (.8,-1.5) -- (-1.5,.1) node [left]{\tiny {$\Gamma^{(5)}$}};
\draw [line width=0.05mm,red ] (1.5,-331/230) -- (-1.5,-1.7) node [left]{\tiny {$\Gamma^{(6)}$}};

\foreach \Point in {(.4,0),(-.4,0),(0,.4),(0,-.4)}{
    \node at \Point {$\cdot$};
}

%\draw [red] (-.4,.6) to[out=-150,in=100] 
%(-.5,.3) [arc arrow=to pos 1 with length .5mm] %;

%\draw [red] (-.7,0) to[out=-150,in=100] 
%(-.8,-.3) [arc arrow=to pos 1 with length .5mm] ;
\end{tikzpicture}
\caption{Complete quadrilateral.}
\label{fig:Ceva2}
\end{figure}
\noindent By the subsections \ref{subsection2.2} and \ref{3} we have that the elementary geometric base (up to homotopy in $\pi_1(\mathbb{P}^2\setminus \mathcal{B},q)$ and replacing $\gamma_i^{(0)}$ by $x_i$) are
\begin{equation}\label{eq:ceva} 
\begin{aligned}
\Gamma^{(0)}&= (x_1,x_2,x_3,x_4,x_5)\\
\Gamma^{(1)}&= (x_1,x_4, x_3^{x_2},x_2,x_5)\\
\Gamma^{(2)}&= (x_1,x_4, x_3^{x_2},x_5,x_2)\\
\Gamma^{(3)}&= (x_4,x_1, x_3^{x_2},x_5,x_2)\\
\Gamma^{(4)}&= (x_4,x_5, x_3^{x_2x_1},x_1,x_2)\\
\Gamma^{(5)}&= (x_2^{x_3x_2x_1x_5x_4},x_1^{x_3^{x_2}x_1x_5x_4},x_4,x_5,x_3^{x_2x_1})\\
\Gamma^{(6)}&=(x_3^{a^{-1}x_2x_1a},x_2^{x_3x_2x_1x_5x_4},x_1^{x_3^ {x_2}x_1x_5x_4},x_4,x_5)
\end{aligned}
\end{equation}
where
$$a=(x_2x_1)^{x_3x_2x_1}x_5x_4 $$

By Theorem \ref{presentationRandell} we obtain the following presentation:
\begin{equation}\label{quadrilateralpres}
G=\pi_1(\mathbb{P}^2\setminus \mathcal{B},q)\cong\left\langle x_1, \ldots, x_5 \mid [x_4,x_1], [x_5,x_2], [x_4,x_3,x_2], [x_5, x_3^{x_2}, x_1] \right\rangle
\end{equation}

which can be easily seen to be a semidirect product $\mathbb{F}_2 \ltimes \mathbb{F}_3$ where $\mathbb{F}_2=\left\langle x_4, x_5 \right\rangle$ and $\mathbb{F}_3:=\left\langle x_1,x_2,x_3 \right\rangle $. 

%\medskip

 Let $\bar{X}$ denote the blow up of $\mathbb{P}^2$ at $\Sing \mathcal{B}$, to simplify denote $E_k=D_{6+k}$ the exceptional divisor coming from $p_k$.  Consider the reduced LAC surface $M(\mathcal{B},I)$ where $I$ consists of three triple points and two double ones. The simplest case is $I=\{p_1,p_2,p_3,p_4,p_5\}$.
\begin{thm}\label{Counterexample}
The reduced LAC surface $M(\mathcal{B},I)$ has infinite fundamental group and finite $H_1$.
\end{thm}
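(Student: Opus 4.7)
The plan is to apply Corollary \ref{corollary1} to write down an explicit finite presentation of $\pi_1(M(\mathcal{B},I))$ and then recognize the resulting group as the infinite dihedral group $\mathbb{Z}/2\ast\mathbb{Z}/2$. Since the Randell presentation (\ref{quadrilateralpres}) and the sequence of elementary geometric bases $\Gamma^{(0)},\ldots,\Gamma^{(6)}$ along the pencil are already recorded in (\ref{eq:ceva}), the remaining work is to compute, for each $j\in I=\{1,\ldots,5\}$, the elementary singular meridian $\gamma_{p_j}$ as a word in $x_1,\ldots,x_5$ and then impose the five relations $\gamma_{p_j}=1$ on top of (\ref{quadrilateralpres}).

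First, read from the figure the incidences: $L_2,L_3,L_4$ through $p_1$; $L_2,L_5$ through $p_2$; $L_1,L_4$ through $p_3$; $L_1,L_3,L_5$ through $p_4$; and $L_1,L_2,L_6$ through the triple point $p_5$ at infinity (with $L_1,L_2$ parallel in the affine picture). Applying Lemma \ref{finite} to the four finite points, read from the bases $\Gamma^{(0)},\Gamma^{(1)},\Gamma^{(2)},\Gamma^{(3)}$ in (\ref{eq:ceva}), yields
\[
\gamma_{p_1}=x_4x_3x_2,\quad \gamma_{p_2}=x_5x_2,\quad \gamma_{p_3}=x_4x_1,\quad \gamma_{p_4}=x_5\,x_3^{x_2}\,x_1.
\]
Applying Lemma \ref{meridian3} in the form (\ref{eq:7}) to $p_5$ with $\Gamma^{(4)}=(x_4,x_5,x_3^{x_2x_1},x_1,x_2)$ and $k=2$ (the two parallel affine lines occupying the last two slots, as required by Proposition \ref{Prop:3.5}) yields $\gamma_{p_5}^{-1}=x_3^{x_2x_1}\,x_5\,x_4$.

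The algebraic simplification is then a direct substitution. The relations $\gamma_{p_1}=\gamma_{p_2}=\gamma_{p_3}=1$ give $x_4=(x_3x_2)^{-1}=x_1^{-1}$ and $x_5=x_2^{-1}$, hence $x_1=x_3x_2$. Substituting into $\gamma_{p_4}=1$ produces $(x_3x_2)^2=x_2^2$, equivalently $x_3^{x_2}=x_3^{-1}$. This in turn forces $x_3^{x_2x_1}=x_3$, so $\gamma_{p_5}=1$ collapses to $x_2^2=1$. A short verification shows the four Randell cyclic/commutator relations of (\ref{quadrilateralpres}) are now consequences. Setting $a=x_2$ and $b=x_1=x_3x_2$, every generator lies in $\langle a,b\rangle$ and the only surviving relations are $a^2=b^2=1$, so
\[
\pi_1(M(\mathcal{B},I))\;\cong\;\langle a,b\mid a^2,b^2\rangle\;=\;\mathbb{Z}/2\ast\mathbb{Z}/2,
\]
the infinite dihedral group, whose abelianization is $(\mathbb{Z}/2)^2$ of order $4$.

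The main risk is not conceptual but combinatorial: one must track carefully which word of $\Gamma^{(i-1)}$ corresponds to each line through $p_i$, and apply the correct form of Lemma \ref{meridian3} at the point at infinity $p_5$ with the right value of $k$. Once the five meridian words are recorded correctly, the reduction to $\mathbb{Z}/2\ast\mathbb{Z}/2$ is mechanical, and both the infiniteness of $\pi_1(M(\mathcal{B},I))$ and $\#H_1(M(\mathcal{B},I))=4$ follow immediately (and in particular $\mathbb{Z}/2\ast\mathbb{Z}/2$ embeds faithfully in $\GL_2(\mathbb{C})$, matching the assertion made after Theorem \ref{Thm2}).
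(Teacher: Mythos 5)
Your proposal is correct and follows essentially the same route as the paper: the same meridian words from Lemmas \ref{finite} and \ref{meridian3} (your $\gamma_{p_5}^{-1}=x_3^{x_2x_1}x_5x_4$ versus the paper's $\gamma_{p_5}$ is immaterial since the relation imposed is $\gamma_{p_5}=1$), the same quotient via Corollary \ref{corollary1}, and the same algebraic reduction to $\langle a,b\mid a^2,b^2\rangle\cong\mathbb{Z}/2\ast\mathbb{Z}/2$ with abelianization of order $4$.
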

\begin{proof}
Consider the singular meridians $\gamma_{p_j}$ around $p_j$  for $j=1,\ldots, 5$, which by Lemmas \ref{finite} and \ref{meridian3} are given by 
\begin{equation}\label{eq:cevameridians}
\begin{array}{lll} 
 \gamma_{p_1} = x_4 x_3x_2, & 
\gamma_{p_3} = x_4 x_1,  & \gamma_{p_5} = x_3^{x_2 x_1}x_5 x_4,\\
\gamma_{p_2} = x_5 x_2, &
\gamma_{p_4} = x_5 x_3^{x_2} x_1. 
 &
\end{array} 
\end{equation}

\noindent By the corollary \ref{corollary1}  a presentation of $\pi_1(M(\mathcal{B},I))$ can be obtained by
$$H:=\pi_1(M(\mathcal{B},I))=\pi_1(\mathbb{P}^2\setminus \mathcal{B},q)/ \langle\langle \gamma_{p_1}, \gamma_{p_2}, \gamma_{p_3}, \gamma_{p_4}, \gamma_{p_5}  \rangle\rangle . $$
By making $\gamma_{p_2}=1$ and $\gamma_{p_3}=1$ we obtain $x_5=x_2^{-1}$ and $x_4=x_1^{-1}$, replacing them in (\ref{quadrilateralpres}) and (\ref{eq:cevameridians}), we obtain the presentation
$$H=\langle x_1,x_2,x_3| [x_1^{-1},x_3,x_2], [x_2^{-1},x_3^{x_2},x_1], x_1=x_3x_2, x_2=x_3^{x_2}x_1, x_3^{x_2x_1}=x_1x_2 \rangle $$ 
By replacing $x_1$ by $x_3x_2$ the relation $[x_2^{-1}x_3^{-1}, x_3,x_2]$ becomes trivial. So we are left with:
$$H=\langle x_2,x_3 \mid  [x_2^{-1},x_3^{x_2},x_3x_2], \  x_2=x_3^{x_2}x_3x_2, \  x_3^{x_2x_3x_2}=x_3x_2x_2 \rangle $$ 

By writing down the relations:
\begin{align}
x_2^{-2}(x_3x_2)^2=x_2^{-1}x_3x_2x_3=x_3x_2^{-1}x_3x_2\\
x_2^2=(x_3x_2)^2 \label{x22x3x2}\\
(x_3x_2)^2=x_2(x_3x_2)^2x_2\label{x3x3}
\end{align}

By replacing (\ref{x22x3x2}) in (\ref{x3x3}) we obtain that $x_2^2=1$, hence $(x_3x_2)^2=1$. Note that these two relations include all the precedent. Therefore we obtain the presentation 

$$H=\langle x_2, x_3 \mid x_2^2=1, (x_3x_2)^2=1 \rangle$$
which can be seen either as $\mathbb{Z}/2\mathbb{Z}*\mathbb{Z}/2\mathbb{Z}$ or as $\mathbb{Z}/2\mathbb{Z}\ltimes \mathbb{Z}$, by this we see that $H$ is infinite and its abelianization is finite.
\end{proof} 
We can clarify this example geometrically by means of the following proposition.

\begin{prop}
There exists an orbifold morphism from $M(\mathcal{B},I)$ to $\mathcal{X}(\mathbb{P}^1,D,r)$ where $D=[0:1]+[1:-1]+[1:0]$ and $r=(2,+\infty,2)$. The morphism comes from a pencil of conics and induces an isomorphism between orbifold fundamental groups.
\end{prop}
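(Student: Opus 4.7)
In the coordinates in which $\mathcal{B}$ has equation $(z_1^2-z_2^2)(z_1^2-z_3^2)(z_2^2-z_3^2)=0$, the four triple points are $(\pm 1:\pm 1:1)$ and the pencil of conics through them is exactly $\{a z_1^2+b z_2^2+c z_3^2=0:a+b+c=0\}$, parametrised by $\mathbb{P}^1\ni[a:b]\mapsto[a:b:-a-b]$. Its base locus is the four triple points, so it lifts to a regular morphism $\tilde{f}:\bar{X}\to\mathbb{P}^1$. The three singular members sit over $[0:1], [1:0], [1:-1]$ and are the three pairs of opposite sides meeting at the three double points $(1:0:0), (0:1:0), (0:0:1)$ respectively. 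A local calculation at a node $y_1^2-y_2^2=0$ shows that its blow-up contributes the exceptional divisor with multiplicity two, so each singular fibre is $D_i+D_j+2E_p$ where $p$ is the node of the corresponding conic.

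Next I would identify which double point sits over which value using the singular meridians from the proof of Theorem~\ref{Counterexample}: since $\gamma_{p_2}=x_5 x_2$ and $\gamma_{p_3}=x_4 x_1$, the nodes $p_2, p_3\in I$ belong to two of the three singular conics, so the remaining double point $p_7\notin I$ is the node of the third, the one sitting over $[1:-1]$. When we remove $D_1,\ldots,D_6,E_6,E_7$ from $\bar{X}$ to form $M(\mathcal{B},I)$, the fibre over $[1:-1]$ is emptied, each of the other two singular fibres restricts to $E_{p_j}\setminus\{2\text{ pts}\}\cong\mathbb{C}^*$ appearing in $\tilde{f}^*$ with multiplicity two, and a generic fibre becomes $\mathbb{P}^1\setminus\{1\text{ pt}\}\cong\mathbb{C}$ (the missing point being the intersection with the exceptional divisor over the unique triple point not in $I$). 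Thus $\tilde{f}$ restricts to a surjective morphism $g:M(\mathcal{B},I)\to\mathbb{P}^1\setminus\{[1:-1]\}$ whose multiple fibres both have multiplicity two, and $g$ is the desired orbifold morphism to $\mathcal{X}(\mathbb{P}^1,D,(2,+\infty,2))$.

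For the isomorphism on fundamental groups, the orbifold definition of Section~2 gives $\pi_1(\mathcal{X}(\mathbb{P}^1,D,(2,+\infty,2)))=\langle \alpha_1,\alpha_3\mid \alpha_1^2=\alpha_3^2=1\rangle\cong\Z/2\Z\ast\Z/2\Z$, where $\alpha_1,\alpha_3$ are the meridians around $[0:1]$ and $[1:0]$. Each $L_i$ appears with multiplicity one in its singular fibre, so the Randell meridian $x_i$ around $L_i$ maps under $g_*$ to the meridian around the singular value above $L_i$; since $L_1$ and $L_2$ sit over the two distinct weight-two values, $\{g_*(x_1),g_*(x_2)\}=\{\alpha_1,\alpha_3\}$, and $g_*$ is surjective. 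By Theorem~\ref{Counterexample}, $\pi_1(M(\mathcal{B},I))\cong\Z/2\Z\ast\Z/2\Z$ as well; since this is the infinite dihedral group (virtually $\Z$, hence Hopfian), any surjection between two isomorphic copies of it is an isomorphism, so $g_*$ is. The main obstacle I expect is purely the labelling bookkeeping: carefully matching which $L_i$ lies in which singular conic so that $[1:-1]$ really is the value whose fibre is emptied; once this matching is pinned down, each remaining step is either a local normal form at a node or a routine meridian identification.
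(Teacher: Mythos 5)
Your construction is essentially the paper's own: the same pencil of conics through the four triple points (the paper writes it as $aQ_1-bQ_2$ with $Q_1=z_1^2-z_2^2$, $Q_2=z_1^2-z_3^2$, which is your linear system $a+b+c=0$), the same identification of the three singular members with the three double points as their nodes, the same local computation showing the exceptional divisor enters the pulled-back fiber with multiplicity two, and the same observation that the generic fiber of the restricted map is the strict transform of a smooth conic minus its intersection with $E_7$, hence $\mathbb{C}$. Two bookkeeping slips: the double point not in $I$ is $p_6$, not $p_7$ ($p_7$ is the \emph{triple} point whose exceptional divisor is removed --- you use it correctly in the next paragraph); and in your parametrisation $[a:b]\mapsto az_1^2+bz_2^2-(a+b)z_3^2$ the emptied fiber (the member $z_2^2-z_3^2$, whose node is $p_6$) sits over $[0:1]$ rather than $[1:-1]$ --- immaterial, since an automorphism of $\mathbb{P}^1$ permutes the three points, and the paper's own normalisation ($f=(Q_1:Q_2)$, removed value $(1:1)$) also does not literally match the statement. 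Where you genuinely diverge is the final step: the paper declares the isomorphism of fundamental groups ``clear'' (implicitly, the homotopy exact sequence of a fibration with simply connected generic fiber kills the kernel), whereas you deduce it from surjectivity together with the Hopfian property of $\Z/2\Z\ast\Z/2\Z$ and the computation of Theorem \ref{Counterexample}; that route is valid and arguably more self-contained, but your surjectivity argument as stated is slightly loose: $g_*(x_1)$ and $g_*(x_2)$ are a priori only \emph{conjugates} of $\alpha_1$ and $\alpha_3$, and two such conjugates can generate a proper (finite-index) subgroup of the infinite dihedral group. It is safer to invoke the standard fact that a dominant morphism onto a curve with connected generic fibers induces a surjection onto the orbifold fundamental group of the base with its multiple-fiber structure.
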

\begin{proof}

Consider a pencil $\mathscr{P}$ having $4$ fixed points in general position, which we may assume to be $S=\{p_1,p_4,p_5,p_7\}$. If we let $Q_1=(z_1^2-z_2^2),Q_2=(z_1^2-z_3^2)$ and $Q_3=(z_2^2-z_3^2)$ we have that the complete quadrilateral $\A$ is given by $Q=Q_1 Q_2 Q_3=0 $.

The pencil $\mathscr{P}$ can be written as 
$\mathscr{P}=aQ_1-bQ_2 $ with $a,b\in \mathbb{C}$ not both zero.
Note that $Q_3\in \mathscr{P}$ as 
$Q_3=Q_2 - Q_1$. This pencil defines a rational map
$$f_{\mathscr{P}}:\mathbb{P}^2\to \mathbb{P}^1, \ (z_1:z_2:z_3)\mapsto (Q_1(z_1:z_2:z_3),Q_2(z_1:z_2:z_3))  $$
whose indeterminacy locus is $S$. By blowing it up, we obtain a regular map $\tilde{f}:\Bl_S \mathbb{P}^2 \to \mathbb{P}^1$ with fiber over $(a:b)$ the strict transform of $aQ_1-bQ_2$. 

As any point lying in two elements of the pencil is a fixed point of it, for any $x\in \mathbb{P}^2\setminus S$ there is a unique curve $C\in\mathscr{P}$ passing through it. In particular for the double points $p_2\in \{z_1-z_2=0\}\cap\{z_1+z_2=0\}$ and $p_3\in \{z_1-z_3=0\}\cap\{z_1+z_3=0\}$ the curves are $Q_1$ and $Q_2$ respectively. This allows us to extend $\tilde{f}$ to the blow up of $\Bl_S\mathbb{P}^2$ at $p_2,p_3$ as
$f:\Bl_{S\cup\{p_2,p_3\}}\to \mathbb{P}^1.$
 We have that
$f(E_2)=(1:0)$ and $f(E_3)=(0:1)$. Let $X=\Bl_{S\cup\{p_2,p_3\}}\setminus \{Q \cup E_7 \}$. Note that $f|_X:X\to \mathbb{P}^1\setminus \{(1:1)\}$ as $f(Q_3)=(1,1)$.

Moreover $f|_X$ has double fibers in $(0:1)$ and $(1:0)$. For any other $(a:b)\in\mathbb{P}^1\setminus \{(1:1)\}$ the fiber is the strict transform of $aQ_1-bQ_2$ minus one point (corresponding to the intersection with $E_7$). The former assertion can be seen by local computations: 
Consider $\mathbb{P}^2$ and $\mathbb{P}^1$ with coordinates $(z_1:z_2:z_3)$ and $(u,v)$ respectively. Restricting to the standard open sets $W_3=\{z_3=1\}\subset \mathbb{P}^2$ and $V_2=\{v=1\}\subset \mathbb{P}^1$ we have that $\tilde{f}|_{W_3}=\frac{z_1^2-z_2^2}{z_1^2-1}$ with $z_1^2-1\not = 0$. Blowing up at $p_2=(0,0)$ and working in coordinates $(z_1,Z_2)$ (where $Z_2$ is the coordinate in $U_1\subset \mathbb{P}^1$)  we have that $f|_{W_3\cap U_1}= z_1^2 \frac{1-Z_2^2}{z_1^2-1}$. Analogous computations for the other open sets and for $p_3$ show that the fibers are indeed double.
The last part of the statement is then clear.
%Then $\pi_1^{orb}(Y)=\mathbb{Z}/2\mathbb{Z}\ast \mathbb{Z}/2\mathbb{Z}$ and $\hat{f}|_X$ induces and isomorphism.
\end{proof}
There is a modification of Dimca's suggestion that may still hold.
\begin{ques} Let $X$ be a reduced LAC surface with $H_1(X)$ finite whose universal abelian cover has finite $H_1$. Is $\pi_1(X)$ finite?
\end{ques}

\subsection{Presentation for a weighted complete quadrilateral}
By considering weighted LAC surfaces $\mathcal{X}(\bar{X},D,r)$ we can study the ramified covers of $\bar{X}$  over $D$. In the case where all the lines of $D$  have the same weight Hirzebruch constructed a finite abelian cover in \cite{Hirzebruch}. If moreover we ask the cover to be a quotient of the ball, Deligne-Mostow have given weights (not necessarily equal) for this to hold \cite{DM}. 

Consider again the complete quadrilateral $\mathcal{B}=\{L_1,\ldots, L_6\}$ with the same notation as in \ref{subsectionExample}, suppose $L_6$ is the line at infinity. Let $\tilde{X}=\Bl_S \mathbb{P}^2\to \mathbb{P}^2$ be the blow up of $\mathbb{P}^2$ at the four triple points $S=\{p_1,p_4,p_5,p_7\}$ and $E_1, E_4, E_5, E_7$ be the respective exceptional divisors. 

 Consider the elementary geometric base $\Gamma^{(0)}=(x_1,\ldots, x_5)$. A meridian $x_6$ for the line at infinity around the point $\Sigma^{(0)}\cap L_6$ (recall that $\Sigma^{(0)}$ is the line where $\Gamma^{(0)}$ lies) is given by Lemma \ref{lineatinfinity}
\begin{equation}
x_6= (x_5x_4x_3x_2x_1)^{-1}.
\end{equation}
Denote by $\gamma_{p_i}$ the meridian around $E_i$. By  Lemma \ref{finite}, using respectively the elementary geometric bases $\Gamma^{(0)}$ and $\Gamma^{(3)}$ of (\ref{eq:ceva}), we obtain:
\begin{equation}
\begin{aligned}
y_1:=\gamma_{p_1}&=x_4x_3x_2 \\
y_2:=\gamma_{p_4}&= x_5 x_3^{x_2}x_1
\end{aligned}
\end{equation}

Finally, the meridians around the triple points lying in $L_6$ are given by Lemma \ref{meridian3} and bases $\Gamma^{(4)}$ and $\Gamma^{(6)}$ of (\ref{eq:ceva}).
\begin{equation}
\begin{aligned}
y_3:=\gamma_{p_5}&=x_3^{x_2x_1}x_5x_4 \\
y_4:=\gamma_{p_7}&=x_4^{-1}x_5^{-1}ax_3^{a^{-1}x_2x_1a}
\end{aligned}
\end{equation}
 
where $a=(x_2x_1)^{x_3x_2x_1}x_5x_4. $
\begin{prop} Let $\mathcal{B}$ be the complete quadrilateral, $\tilde{X}$, $\Gamma^{(0)}=(x_1,\ldots,x_5)$ and $y_i$ as above.
For any $r=(m_1,\ldots,m_4,n_1,\ldots, n_{6})\in (\mathbb{N}^*\cup{+\infty})^{10}$ as in \cite{Tretkoff} p.110, $D=E_1+E_4+E_5+E_7+\sum_{i=1}^{6} L_i$ we have a presentation for the fundamental group of the ball quotient $\mathcal{X}(\tilde{X},D,r)$ given by 
$$\pi_1(\mathcal{X}(\tilde{X},D,r))=\left\langle x_1,\ldots,  x_5 |  [x_4,x_1], [x_5,x_2], [x_4,x_3,x_2], [x_5, x_3^{x_2}, x_1] ,\\
 x_i^{n_i}, y_i^{m_i} \right\rangle $$
\end{prop}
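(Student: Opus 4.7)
The proof will be essentially a direct application of Theorem \ref{MainT}. The one mild wrinkle is that $\tilde{X}$ is the blow-up of $\mathbb{P}^2$ only at the four triple points $S=\{p_1,p_4,p_5,p_7\}$, rather than at all of $\Sing\mathcal{B}$ as in the statement of Theorem \ref{MainT}. However, at each of the three double points $p_2,p_3,p_6$ the two lines of $\mathcal{B}$ already meet transversally, so the divisor $D = E_1+E_4+E_5+E_7+\sum_{i=1}^6 L_i$ is simple normal crossings on $\tilde{X}$ and the argument proving Theorem \ref{MainT} goes through without modification.

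The plan proceeds in three steps. First, since the blow-down $\tilde{X}\to\mathbb{P}^2$ restricts to a biholomorphism $\tilde{X}\setminus D\cong M(\mathcal{B})$ (Remark \ref{rema3} applied to the smaller blow-up), Theorem \ref{presentationRandell} together with the computations of the Randell bases carried out in (\ref{eq:ceva}) gives the presentation (\ref{quadrilateralpres}) for $\pi_1(\tilde{X}\setminus D,q)$. By construction the generators $x_1,\ldots,x_5$ are the elementary meridians of $L_1,\ldots,L_5$ in the initial fiber $\Sigma^{(0)}$ of Randell's pencil.

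Second, exhibit a meridian for each of the ten irreducible components of $D$: the generators $x_1,\ldots,x_5$ are meridians of $L_1,\ldots,L_5$; Lemma \ref{lineatinfinity} provides $x_6=(x_5x_4x_3x_2x_1)^{-1}$ as a meridian of $L_6$; and for the four triple points Lemma \ref{finite} gives $y_1=\gamma_{p_1}$ and $y_2=\gamma_{p_4}$ from the bases $\Gamma^{(0)}$ and $\Gamma^{(3)}$, while Lemma \ref{meridian3} gives $y_3=\gamma_{p_5}$ and $y_4=\gamma_{p_7}$ from the bases $\Gamma^{(4)}$ and $\Gamma^{(6)}$ of (\ref{eq:ceva}). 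These are exactly the words listed immediately before the proposition.

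Third, invoke the description of orbifold fundamental groups recalled in Section 2.2: the canonical map $\pi_1(\tilde{X}\setminus D)\to \pi_1(\mathcal{X}(\tilde{X},D,r))$ is surjective with kernel the normal closure of the set $\{\gamma^{r_\ell}\}$ as $\gamma$ runs over one meridian per irreducible component of $D$ with finite weight $r_\ell$. Substituting the meridians exhibited in the previous step yields precisely the stated presentation, with the convention that the relation $x_6^{n_6}=1$ is understood as $(x_5x_4x_3x_2x_1)^{-n_6}=1$. There is no real obstacle: the only delicate point is the bookkeeping of Randell's generators across the fibers $\Sigma^{(0)},\ldots,\Sigma^{(6)}$, and this has already been recorded in (\ref{eq:ceva}) and in the explicit expressions for the $y_j$ given above the proposition.
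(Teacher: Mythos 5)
Your argument is correct and is exactly the one the paper intends: the paper states this proposition without an explicit proof, the computations of $x_6$ and $y_1,\ldots,y_4$ immediately preceding it being the whole content, and your three steps (Randell presentation for $\tilde{X}\setminus D\cong M(\mathcal{B})$, explicit meridians for each component, quotient by normal closures of their $r_\ell$-th powers) reproduce the proof of Theorem \ref{MainT} in this setting. Your added remark that $\tilde{X}$ is blown up only at the triple points, so that $D$ remains simple normal crossings and the argument still applies, is a point the paper leaves implicit and is worth making.
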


\bibliographystyle{smfalpha}
\bibliography{sample}
Rodolfo Aguilar Aguilar \\
Université Grenoble-Alpes, Institut Fourier, 100 rue de Maths, 38402, Saint Martin d'Hères Cedex, France.\\
rodolfo.aguilar-aguilar@univ-grenoble-alpes.fr
\end{document}